\numberwithin{equation}{section}                        
\let\@upn\@iden
\newtheoremstyle{ToddTheorem}                           
  {3pt}
  {3pt}
  {\slshape}
  {}
  {\bfseries\upshape}
  {.}
  {.5em}
  {}
\newtheoremstyle{ToddDefinition}                        
  {3pt}
  {3pt}
  {\upshape}
  {}
  {\bfseries\slshape}
  {.}
  {.5em}
  {}
\newcommand*{\c@theoremassumption}{\c@theorem}
\newcommand*{\p@theoremassumption}{\p@theorem}
\newcommand*{\c@theoremconjecture}{\c@theorem}
\newcommand*{\p@theoremconjecture}{\p@theorem}
\newcommand*{\c@theoremcorollary}{\c@theorem}
\newcommand*{\p@theoremcorollary}{\p@theorem}
\newcommand*{\c@theoremdefinition}{\c@theorem}
\newcommand*{\p@theoremdefinition}{\p@theorem}
\newcommand*{\c@theoremexample}{\c@theorem}
\newcommand*{\p@theoremexample}{\p@theorem}
\newcommand*{\c@theoremfigure}{\c@theorem}
\newcommand*{\p@theoremfigure}{\p@theorem}
\newcommand*{\c@theoremhypothesis}{\c@theorem}
\newcommand*{\p@theoremhypothesis}{\p@theorem}
\newcommand*{\c@theoremlemma}{\c@theorem}
\newcommand*{\p@theoremlemma}{\p@theorem}
\newcommand*{\c@theoremproposition}{\c@theorem}
\newcommand*{\p@theoremproposition}{\p@theorem}
\newcommand*{\c@theoremremark}{\c@theorem}
\newcommand*{\p@theoremremark}{\p@theorem}
\newcommand*{\c@theoremnotation}{\c@theorem}
\newcommand*{\p@theoremnotation}{\p@theorem}
\newtheoremstyle{SpringerTheorem}                           
  {3pt}
  {3pt}
  {\itshape}
  {}
  {\bfseries}
  {.}
  {.5em}
  {}
\newtheoremstyle{SpringerDefinition}                        
  {3pt}
  {3pt}
  {\rmfamily}
  {}
  {\bfseries}
  {.}
  {.5em}
  {}
\newtheoremstyle{SpringerExample}                        
  {3pt}
  {3pt}
  {\rmfamily}
  {}
  {\itshape}
  {.}
  {.5em}
  {}
\theoremstyle{SpringerTheorem} %
\newtheorem{theorem}{Theorem}[section]
\newtheorem{lemma}[theoremlemma]{Lemma}
\newtheorem{proposition}[theoremproposition]{Proposition}
\theoremstyle{SpringerDefinition} %
\theoremstyle{SpringerExample} %
\newtheorem{remark}[theoremremark]{Remark}
\newcommand{\TheAuthor}{}
\newcommand{\Author}[1]%
        {\renewcommand{\TheAuthor}{#1}}                 
\newcommand{\TheRunningTitle}{}
\newcommand{\RunningTitle}[1]%
        {\renewcommand{\TheRunningTitle}{#1}}           
\renewcommand{\footrulewidth}{0.4pt}
\newenvironment{acknowledgment}%
  {\begin{trivlist}\item[]\textbf{Acknowledgments.}}{\end{trivlist}}
\providecommand*{\toclevel@assumption}{0}
\providecommand*{\toclevel@conjecture}{0}
\providecommand*{\toclevel@corollary}{0}
\providecommand*{\toclevel@definition}{0}
\providecommand*{\toclevel@example}{0}
\providecommand*{\toclevel@figure}{0}
\providecommand*{\toclevel@hypothesis}{0}
\providecommand*{\toclevel@lemma}{0}
\providecommand*{\toclevel@proof}{0}
\providecommand*{\toclevel@proposition}{0}
\providecommand*{\toclevel@remark}{0}
\providecommand*{\toclevel@theorem}{0}
\newcommand{\eref}[1]{\hyperref[{#1}]{(\ref*{#1})}}
\newcommand{\C}{\mathbb{C}}
\newcommand{\R}{\mathbb{R}}
\newcommand{\Z}{\mathbb{Z}}
\def\det{\mathop\mathrm{det}\nolimits}
\def\dim{\mathop\mathrm{dim}\nolimits}
\def\exp{\mathop\mathrm{exp}\nolimits}
\def\gker{\mathop\mathrm{gker}\nolimits}
\def\id{\mathcal{I}}
\def\ker{\mathop\mathrm{ker}\nolimits}
\def\Span{\mathop\mathrm{span}\nolimits}
\def\Re{\mathop\mathrm{Re}\nolimits}
\def\coloneqq{\mathrel{\mathop:}=}
\newcommand{\rma}{\mathrm{a}}
\newcommand{\rmc}{\mathrm{c}}
\newcommand{\rmd}{\mathrm{d}}
\newcommand{\rme}{\mathrm{e}}
\newcommand{\rmi}{\mathrm{i}}
\newcommand{\rmn}{\mathrm{n}}
\newcommand{\rmr}{\mathrm{r}}
\newcommand{\rmz}{\mathrm{z}}
\newcommand{\rmT}{\mathrm{T}}
\newcommand{\calA}{\mathcal{A}}
\newcommand{\calB}{\mathcal{B}}
\newcommand{\calC}{\mathcal{C}}
\newcommand{\calH}{\mathcal{H}}
\newcommand{\calJ}{\mathcal{J}}
\newcommand{\calL}{\mathcal{L}}
\newcommand{\calP}{\mathcal{P}}
\newcommand{\calQ}{\mathcal{Q}}
\newcommand{\calS}{\mathcal{S}}
\newcommand{\calT}{\mathcal{T}}
\newcommand{\vD}{\bm{\mathit{D}}}
\newcommand{\vK}{\bm{\mathit{K}}}
\newcommand{\vS}{\bm{\mathit{S}}}
\newcommand{\vU}{\bm{\mathit{U}}}
\newcommand{\vc}{\bm{\mathit{c}}}
\newcommand{\vx}{\bm{\mathit{x}}}
\newcommand{\vu}{\bm{\mathit{u}}}
\newcommand{\vv}{\bm{\mathit{v}}}
\newcommand{\vw}{\bm{\mathit{w}}}
\newcommand{\vn}{\bm{\mathit{0}}}
\newcommand{\vomega}{\boldsymbol{\omega}}
\newcommand{\gl}{\mathfrak{g}}
\begin{document}

\title{An instability index theory for quadratic pencils and applications}
\author{%
        Jared Bronski %
        \thanks{E-mail: \href{mailto:jared@math.uiuc.edu}{jared@math.uiuc.edu}} \\
        Department of Mathematics \\
        University of Illinois Urbana-Champaign \\
        Urbana, IL 61801 %
\and
        Mathew A. Johnson %
        \thanks{E-mail: \href{mailto:matjohn@math.ku.edu}{matjohn@math.ku.edu}} \\
        Department of Mathematics \\
        University of Kansas \\
        Lawrence, KS 66045%
\and
        Todd Kapitula %
        \thanks{E-mail: \href{mailto:tmk5@calvin.edu}{tmk5@calvin.edu}} \\
        Department of Mathematics and Statistics \\
        Calvin College \\
        Grand Rapids, MI 49546 %
}


\begin{titlingpage}
\usethanksrule
\setcounter{page}{0}                                    
\maketitle
\begin{abstract}
Primarily motivated by the stability analysis of nonlinear waves in
second-order in time Hamiltonian systems, in this paper we develop an
instability index theory for quadratic operator pencils acting on a Hilbert
space. In an extension of the known theory for linear pencils, explicit
connections are made between the number of eigenvalues of a given quadratic
operator pencil with positive real parts to spectral information about the
individual operators comprising the coefficients of the spectral parameter in
the pencil.  As an application, we apply the general theory developed here to
yield spectral and nonlinear stability/instability results for abstract
second-order in time wave equations. More specifically, we consider the
problem of the existence and stability of spatially periodic waves for the
``good" Boussinesq equation. In the analysis our instability index theory
provides an explicit, and somewhat surprising, connection between the
stability of a given periodic traveling wave solution of the ``good"
Boussinesq equation and the stability of the same periodic profile, but with
different wavespeed, in the nonlinear dynamics of a related generalized
Korteweg-de Vries equation.
\end{abstract}

\cancelthanksrule
\renewcommand{\footrulewidth}{0.0pt}                    

\pdfbookmark[1]{\contentsname}{toc}                     
\tableofcontents                                        
\end{titlingpage}

\section{Introduction}\label{sec:intro}

When analyzing equations arising in mathematical physics and engineering, the question of stability
of special families of solutions is of prominent importance as it generally determines those solutions which
are most likely to be observed in physical applications.  In particular, solutions which are unstable do not naturally,
i.e. in absence of a controller, arise in applications except possibly as transient phenomena.  Furthermore,
stability analysis is often the first step in the study of finer phenomena such as transient behavior,
bifurcation, and the ability to control a wave to restrict it to a stable configuration.  In this paper, we are primarily
motivated by recent studies into the stability of traveling wave solutions of second-order in time
Hamiltonian equations of the form
\begin{equation}\label{e:ham}
\partial_t^2u+\mathcal{L}_xu+\mathcal{N}(u)=0,\quad (t,x)\in\mathbb{R}^2
\end{equation}
where $\mathcal{L}_x$ is a self-adjoint linear operator acting on the
$x$-variable only, and $\mathcal{N}(u)$ denotes nonlinear terms (e.g., see
\citep{angulo,arruda:nsp09,hakkaev:lsa12,stanis}).  A fundamental
characteristic of such PDE is that they take into account weak effects of
both nonlinearity and dispersion, and they arise naturally, for instance, as
models for propagation of waves in nonlinear strings and in the study of
bi-directional water wave propagation in the small amplitude, long wavelength
regime.

In regards to the latter water wave application, an equation of particular
interest in this paper (see \autoref{s:gbou} below) is the generalized
``good" Boussinesq (gB) equation
\begin{equation}\label{e:gbou}
\partial_t^2u-\partial_x^2\left(\partial_x^2u-u+f(u)\right)=0,
\end{equation}
which is a variant of one of one of the equations formulated by Boussinesq in
the 1870's in precisely this physical context. While the nonlinear stability
and instability of solitary waves in \eref{e:gbou} is by now well understood
(see \citep{bona:geo88,liu}), the stability (whether linear or nonlinear) of
the periodic traveling waves have received considerably less attention, and
results only exist for very special classes of solutions; namely, those
expressible in terms of Jacobi-elliptic functions.  One of the main
applications of the theory developed in this paper is an explicit connection
between periodic traveling waves of \eref{e:gbou} and the stability of the
same traveling wave profile (but with a \emph{different} wavespeed) in the
nonlinear dynamics governed by the generalized Korteweg-de Vries (gKdV)
equation
\begin{equation}\label{e:gkdv}
\partial_tu+\partial_x\left(\partial_x^2u+f(u)\right)=0.
\end{equation}
We consider this observation as a major contribution to the theory of
traveling periodic waves in \eref{e:gbou}. Indeed, since the stability of
periodic traveling waves in gKdV equations has been under intense
investigation over the last few years (see, for instance,
\citep{angulobona,arruda:nsp09,brj,bronski:ait11,deconinck:ots10,deconinck:tos10,Jkdv}),
this connection between the dynamics of gB and gKdV near periodic traveling
waves allows one to immediately translate known results for the stability of
gKdV periodic waves to results about periodic waves in gB. Applications of
this connection will be illustrated in \autoref{s:case1} and
\autoref{s:case2} below.

Suppose that $u(x-ct)$ is a traveling wave solution to \eref{e:ham}. When
considering small perturbations to this wave of the form $e^{\lambda
t}v(x-ct),\,\lambda\in\C$, we are naturally led to quadratic spectral
problems of the form
\[
\lambda^2 v -2c\lambda\partial_xv+\left(c^2\partial_x^2+\mathcal{L}_x+\mathcal{N}'(u)\right)v=0.
\]
More generally, when considering the spectral stability of the given
nonlinear dispersive wave $u$ it becomes important to understand the spectrum
of quadratic operator pencils of the form
\begin{equation}\label{e:i1}
\calP_2(\lambda)\coloneqq\calA+\lambda\calB+\lambda^2\calC,
\end{equation}
where the operators $\calA$ and $\calC$ are self-adjoint on some Hilbert
space $X$, endowed with an inner-product $\langle\cdot,\cdot\rangle$, and
$\calB$ is skew-symmetric
on X.
The class of perturbations considered in our applications naturally leads us
to assume that
the domain of each of the operators $\calA,\calB,\calC$
is dense in $X$ 
and that they each enjoy a particular compactness property
which guarantees that the spectrum of $\calP_2(\lambda)$, i.e., the collection of values $\lambda$ for which
$\calP_2(\lambda)$ fails to be boundedly invertible, is composed of point spectrum
only, that each eigenvalue has finite algebraic multiplicity, and the only
accumulation point of the eigenvalues is infinity (see
\autoref{l:noessential} below for a precise statement).
In this paper
$\sigma(\calP_2)$ will denote the collection of all eigenvalues for the
pencil.

Due to its clear connection and importance in analyzing the spectral
stability of traveling wave solutions of equations of the form \eref{e:ham},
our main theoretical results concern extending many previously known results
regarding the number of eigenvalues of $\calP_2$ with positive real part. For
the readers convenience, we now briefly recall the relevant known results.
The spectrum was studied in \citep{pivovarchik:oso07} under the assumption
that $\calA$ has compact resolvent, and $\calB,\calC$ are bounded and
positive semi-definite. The operators $\calB,\calC$ are not assumed to have
any symmetry properties, however. While it is not shown in that paper, by
\autoref{l:noessential} it is then known that the pencil only possesses point
eigenvalues, each of which has finite algebraic multiplicity. It is shown in
that paper that if $\calA$ is positive semi-definite, then all of the
spectrum is located in the closed left-half of the complex plane. If $\calA$
is not definite, it is shown that if $\calC=\id$ and $\calB$ is positive
definite, then the total number of eigenvalues in the closed right-half of
the complex-plane is equal to the number of negative eigenvalues of $\calA$.
As is seen in, e.g., \autoref{s:3}, it is not necessary that $\calC=\id$ in
order for that result to hold; indeed, all that is needed is that $\calC$ be
positive-definite. In \citep{lyong:tsp93} similar results are shown regarding
the number of eigenvalues with positive real part under the assumption that
$\calC=\id$ and $\calB$ is positive semi-definite. The spectrum of operators
of the form $\calP_2$ was studied in \citep{gurski:sdo04} under the
assumptions that both $\calA,\,\calC$ are positive definite and self-adjoint,
while $\calB$ is a negative definite self-adjoint operator (also see
\citep{kollar:hmf11} for a generalization). Under assumptions different than
those given in \autoref{l:noessential} (in particular, both $\calA$ and
$\calC$ are assumed to be compact) it is shown therein that in
$\sigma(\calP_2)$ there is an infinite number of positive eigenvalues which
have zero as a limit point. Finally, the matrix-valued version of the pencil
was studied in \citep{kollar:hmf11,chugunova:oqe09} under the assumption that
$\calC=\id$ and $\calA,\,\calB$ self-adjoint. Therein a parity index is given
relating the number of eigenvalues with positive real part to the number of
negative eigenvalues of $\calA$. This result strongly depends upon the fact
that the operators are matrix-valued, and consequently the Hilbert space
under question is finite-dimensional.

One of the main goals of this paper is thus to extend the previous theory
regarding the number of eigenvalues (counting multiplicity) with positive
real part; see Section \ref{s:3} below. Throughout this analysis, two
underlying assumptions will be:
\begin{enumerate}
\item $\rmn(\calA),\rmn(\calC)<+\infty$, where $\rmn(\calS)$ refers to the number of negative
eigenvalues (counting multiplicity) of the self-adjoint operator $\calS$
\item $\calC$ is invertible, i.e., $\rmz(\calC)=0$, where $\rmz(\calS)=\dim[\ker(\calS)]$ for
the self-adjoint operator $\calS$.
\end{enumerate}
Under these assumptions we have the intuition that the sum
$\rmn(\calA)+\rmn(\calC)$ acts as an upper bound on the total number of
eigenvalues of $\calP_2$ with positive real part. In order to see this,
consider the following. If the linear term in the pencil is dropped, then the
quadratic pencil is equivalent to the linear pencil
\[
\left[\left(\begin{array}{cc}\calA&0\\0&\calC^{-1}\end{array}\right)-\lambda
\left(\begin{array}{rr}0&\id\\-\id&0\end{array}\right)\right]
\left(\begin{array}{c}u\\v\end{array}\right)=
\left(\begin{array}{c}0\\0\end{array}\right).
\]
Note that the first term in the pencil is self-adjoint, whereas the second
term is skew-symmetric. Linear pencils of this form have been well-studied
(e.g., see
\citep{haragus:ots08,kapitula:cev04,kapitula:ace05,pelinovsky:ilf05} and the
references therein), and for this problem it has firmly been established that
the intuition is indeed correct. The technical difficulty is then the
inclusion of the linear term, and it is overcome in the establishment of
\autoref{thm:index} below.
In particular, in the case
where $\calP_2(\lambda)$ arises in the stability analysis of a given periodic traveling wave (as described previously),
a sufficient condition for spectral stability is that $\rmn(\calA)+\rmn(\calC)=0$.

In order to achieve an equality relating the number of eigenvalues of
$\calP_2$ with positive real part to spectral properties of the operators
$\calA$, $\calB$, and $\calC$ themselves, two additional factors must be
taken into account:
\begin{enumerate}
\item the effect of $\calA$ having a nontrivial kernel
\item purely imaginary eigenvalues having a negative Krein index (see \autoref{s:3}).
\end{enumerate}
The first factor arises because in applications the presence of symmetries
yields the existence of a nontrivial kernel, while the consideration of the
second factor is necessary in order to remove the intuitive inequality and
make it an equality.

The paper is organized as follows. In \autoref{s:2} it is shown that under
general assumptions, which are natural for the applications we have in mind,
there is no essential spectrum for the quadratic operator pencil $\calP_2$;
in other words, there will be only point eigenvalues, and each eigenvalue
will have finite algebraic multiplicity. The proof of this result easily
generalizes to polynomial operators of arbitrary order; see
\autoref{r:higherorderpencil} below. In \autoref{s:3} the main theoretical
result of the paper relating the number of eigenvalues of $\calP_2$ with
positive real part to the spectral properties of $\calA$, $\calB,$ and
$\calC$, are stated and proved.  Finally, in \autoref{s:4} we apply the
theoretical results developed in \autoref{s:2} and \autoref{s:3} to the study
of the spectral and orbital stability of nonlinear waves to second-order in
time Hamiltonian systems.  The first application develops a general
theoretical result concerning the stability of steady states in abstract
nonlinear wave equations; see \autoref{s:wave}. The second application
examines the spectral and nonlinear (orbital) stability of periodic traveling
waves in the ``good" Boussinesq equation \eref{e:gbou}; see \autoref{s:gbou}.
As described above, of particular interest in our study of the Boussinesq
equation is that we establish in \autoref{lem:51} and
\autoref{thm:orbstable2} a rigorous connection between a given stationary
periodic wave $u$ of the equation
\[
\partial_t^2u-2c\partial_{tx}^2u+\partial_x^2\left(\partial_{x}^2u-(1-c^2)u+f(u)\right)=0,\quad |c|<1,
\]
corresponding to the traveling wave $u(x-ct)$ in the gB, and the stability of
the same stationary wave profile in the gKdV,
\[
\partial_tu+\partial_x\left(\partial_x^2-\sqrt{1-c^2}\,u+f(u)\right)=0.
\]
To illustrate the power of this connection, in \autoref{s:case1} we translate
recent results by Bronski et al. \citep{bronski:ait11} concerning the
stability of periodic KdV waves with power-law nonlinearity to results for
the corresponding Boussinesq equation, allowing one to see the complete
stability picture of periodic waves in gB in the cases considered. In
\autoref{s:case2}, using known asymptotic analyses worked out in the context
of stability theory for gKdV waves (see \citep{brj}), we analyze the
stability of periodic waves with power-law nonlinearity which are near (in an
appropriate sense) either the solitary wave or equilibrium solutions of gB.
The analysis near the solitary wave makes a beautiful connection with the
known nonlinear stability/instability results for the nearby solitary waves.
Similarly, the analysis near the equilibrium solution provides new insights
into the transitions to instability in the periodic context; in particular,
we find that for a given power-law nonlinearity stable periodic traveling
wave solutions always exist, even when no stable solitary wave exists.

\begin{acknowledgment}
TK gratefully acknowledges the support of the Jack and Lois Kuipers Applied
Mathematics Endowment, a Calvin Research Fellowship, and the National Science
Foundation under grants DMS-0806636 and DMS-1108783. MJ was partially
supported by the University of Kansas General Research Fund allocation
2302278.  JB gratefully acknowledges support from the National Science
Foundation under grant DMS-DMS-0807584.
\end{acknowledgment}

\section{Preliminary result: spectra is point only}\label{s:2}

\noindent\textbf{Notation:} In this paper, and particularly in this and the
subsequent section, the notion of \textsl{matrix representation} of a
self-adjoint operator $\calS$ constrained to a subspace $E$, i.e., $S|_{E}$,
will often be used. Let $\{e_1,\dots,e_n\}$ be a basis for $E$, and let
$P_E:X\mapsto E$ be the orthogonal projection. If the basis is orthonormal,
then one can write
\[
P_E=\sum_{j=1}^n\langle\cdot,e_j\rangle e_j.
\]
Setting $S|_E=P_E\calS P_E:E\mapsto E$, the quadratic form
\[
\langle u,S|_Eu\rangle=\langle P_Eu,P_E\calS P_Eu\rangle=\vc\cdot\vS\vc,
\]
where $P_Eu=\sum c_je_j,\,\vc=(c_1,\dots,c_n)^\rmT$, and the symmetric matrix
$\vS\in\R^{n\times n}$ is given by $\vS_{ij}=\langle e_i,\calS e_j\rangle$.
The matrix $\vS$ is precisely the matrix representation for the self-adjoint
operator $\calS|_E$. In the rest of this paper the symbol $\calS|_E$ will be
used to represent the matrix representation $\vS$ of $\calS$ constrained to
operate on the subspace $E$.

\vspace{2mm}

The goal of this section is to demonstrate that for the quadratic pencil of
\eref{e:i1}, i.e.,
\[
\calP_2(\lambda)=\calA+\lambda\calB+\lambda^2\calC,
\]
there is point spectra only, that each eigenvalue has finite multiplicity,
and infinity is the only possible limit point of the eigenvalues. The result
requires the use of \citep[Theorem~12.9]{markus:itt88}, in which it is shown
that the spectrum has the desired properties for the polynomial operator
\[
\calP_n(\lambda)=\id+\sum_{j=1}^n\lambda^n\calA_j
\]
if each operator $\calA_j,\,j=1,\dots,n$, is compact. With this result in
mind, first assume that $\calA$ is invertible, and rewrite the eigenvalue
problem as
\[
\calA(\id+\lambda\calA^{-1}\calB+\lambda^2\calA^{-1}\calC)u=0.
\]
Of course, since $\calA$ is nonsingular this problem is equivalent to
\[
(\id+\lambda\calA^{-1}\calB+\lambda^2\calA^{-1}\calC)u=0.
\]
If the operators $\calA^{-1}\calB$ and $\calA^{-1}\calC$ are both compact,
then the result immediately follows from \citep[Theorem~12.9]{markus:itt88}.

On the other hand, now suppose that $\ker(\calA)$ is nontrivial, but that
$\calA$ has compact resolvent. The proof of the desired result will now be
accomplished via the construction and evaluation of a generalized
\textsl{Krein matrix}, which was recently introduced in a general form in
\citep{kapitula:tks10}. Let $P_\calA:X\mapsto\ker(\calA)$ be the orthogonal
projection. Writing $u=a+a^\perp$, where $a\in\ker(\calA)$ and $a^\perp\in
\ker(\calA)^\perp$, the eigenvalue problem becomes
\begin{equation}\label{e:pp2}
\calP_2(\lambda)a+\calP_2(\lambda)a^\perp=0.
\end{equation}
Defining the complementary projection $P_{\calA}^\perp\coloneqq\id-P_\calA$,
applying this projection to \eref{e:pp2}, and solving for
$a^\perp=P_{\calA}^\perp a^\perp$ yields
\begin{equation}\label{e:pp3}
a^\perp=-(P_{\calA}^\perp\calP_2(\lambda)P_{\calA}^\perp)^{-1}P_{\calA}^\perp\calP_2(\lambda)a.
\end{equation}
In the formulation of \eref{e:pp3} it is implicitly being assumed that
$\calP_2(\lambda)a^\perp\neq0$. If $\calP_2(\lambda)a^\perp=0$, then
$\lambda$ is an eigenvalue whose eigenfunction is in $\ker(\calA)^\perp$:
this follows immediately from \eref{e:pp2} upon setting $a=0$. Since
$\calP_2(\lambda)a=0$, the potential pole singularity for such a $\lambda$ is
removable. If the inner-product with $a$ is now taken in \eref{e:pp2}, then
it is seen that
\begin{equation}\label{e:pp4}
\langle a,\calP_2(\lambda)a\rangle+\langle a^\perp,\calP_2(\lambda)^\rma a\rangle=0,
\end{equation}
where
\[
\calP_2(\lambda)^\rma=\calA-\overline{\lambda}\calB+\overline{\lambda}^2\calC
\]
is the adjoint operator for the original pencil. Note that the fact that
$\calA,\calC$ are self-adjoint and $\calB$ is skew-symmetric was used in this
formulation of the adjoint pencil. Substituting the expression for $s^\perp$
given in \eref{e:pp3} into \eref{e:pp4} yields the linear system
\begin{equation}\label{e:pp5}
\vK_2(\lambda)\vx=\vn,\quad
\vK_2(\lambda)\coloneqq\calP_2(\lambda)|_{\ker(\calA)}-%
(P_{\calA}^\perp\calP_2(\lambda)P_{\calA}^\perp)^{-1}|_{P_{\calA}^\perp\calP_2(\lambda)[\ker(\calA)]}.
\end{equation}
Here
\[
\vx\in\C^{\rmz(\calA)},\quad\vK_2(\lambda)\in\C^{\rmz(\calA)\times\rmz(\calA)},
\]
and
\[
\calP_2(\lambda)[\ker(\calA)]\coloneqq\{\calP_2(\lambda)a:a\in\ker(\calA)\}.
\]
The matrix $\vK_2(\lambda)$ is known as the Krein matrix.

Eigenvalues for the pencil are found either via $\vx=\vn$, which means that
if $\lambda$ is an eigenvalue, then the associated eigenfunction satisfies
$u\in\ker(\calA)^\perp$, or $\det[\vK_2(\lambda)]=0$. The quest for
eigenvalues is now a search for the zeros of the determinant of the Krein
matrix. The poles of the Krein matrix are the eigenvalues of the operator
\[
P_{\calA}^\perp\calP_2(\lambda)P_{\calA}^\perp=
P_{\calA}^\perp\calA P_{\calA}^\perp+\lambda P_{\calA}^\perp\calB P_{\calA}^\perp+
\lambda^2P_{\calA}^\perp\calC P_{\calA}^\perp:\ker(\calA)^\perp\mapsto\ker(\calA)^\perp.
\]
Since $P_{\calA}^\perp\calA
P_{\calA}^\perp:\ker(\calA)^\perp\mapsto\ker(\calA)^\perp$ is invertible,
from the earlier argument in this section we know that the operator
$P_{\calA}^\perp\calP_2(\lambda)P_{\calA}^\perp$ has point spectra only, each
eigenvalue has finite multiplicity, and infinity is the only possible limit
point of the eigenvalues. Thus, one can say that
$\det[\vK_2(\lambda)]:\C\mapsto\C$ is meromorphic, each singularity is a pole
of finite order, and the only possible accumulation point of the poles is
infinity. Since $\det[\vK_2(\lambda)]$ is meromorphic, it is then known that
each of its zeros is of finite order, and that the only possible accumulation
point of the zeros is infinity. Finally, it was demonstrated in
\citep{kapitula:tks10} that for linear pencils the order of the zero of
$\det[\vK_2(\lambda)]$ is equal to the algebraic multiplicity of the
eigenvalue. The proof of that result easily carries over to quadratic
pencils, and will be left for the interested reader.

\begin{lemma}\label{l:noessential}
Let $P_\calA:X\mapsto\ker(\calA)$ be the orthogonal projection, and let
$P_\calA^\perp=\id-P_\calA$ be the complementary projection. Suppose that the
operators
\[
(P_\calA^\perp\calA P_\calA^\perp)^{-1}P_\calA^\perp\calB P_\calA^\perp,\,%
(P_\calA^\perp\calA P_\calA^\perp)^{-1}P_\calA^\perp\calC P_\calA^\perp:\ker(\calA)^\perp\mapsto\ker(\calA)^\perp
\]
are both compact. Then the spectrum of the quadratic pencil
$\calA+\lambda\calB+\lambda^2\calC$ is point spectra only. Furthermore, each
eigenvalue has finite multiplicity, and infinity is the only possible limit
point of the eigenvalues.
\end{lemma}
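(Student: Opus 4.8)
The plan is to reduce everything to the Markus compactness criterion \citep[Theorem~12.9]{markus:itt88}, which gives exactly the asserted conclusion --- point spectrum only, each eigenvalue of finite algebraic multiplicity, infinity the sole possible accumulation point --- for a monic polynomial pencil $\id+\sum_{j}\lambda^{j}\calA_{j}$ all of whose coefficients are compact. When $\ker(\calA)=\{0\}$ this is immediate: then $P_{\calA}=0$ and $P_{\calA}^{\perp}=\id$, so the hypotheses say precisely that $\calA^{-1}\calB$ and $\calA^{-1}\calC$ are compact; since $\calA$ is boundedly invertible, $\calP_{2}(\lambda)u=0$ is equivalent to $(\id+\lambda\calA^{-1}\calB+\lambda^{2}\calA^{-1}\calC)u=0$, and Markus's theorem with $n=2$ applies verbatim.

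For $\ker(\calA)\neq\{0\}$ --- where one also tacitly uses $\rmz(\calA)<\infty$, guaranteed for instance when $\calA$ has compact resolvent, so that the Krein matrix is a genuine finite matrix --- I would carry out the Feshbach/Schur reduction already set up in \eref{e:pp2}--\eref{e:pp5}: decompose $u=a+a^{\perp}$ with $a\in\ker(\calA)$, $a^{\perp}\in\ker(\calA)^{\perp}$, solve the $P_{\calA}^{\perp}$-component for $a^{\perp}$ as in \eref{e:pp3}, and substitute into the $\ker(\calA)$-component \eref{e:pp4} to obtain the Krein system \eref{e:pp5}. The pivotal observation is that the restricted pencil $P_{\calA}^{\perp}\calP_{2}(\lambda)P_{\calA}^{\perp}$ on $\ker(\calA)^{\perp}$ has \emph{invertible} zeroth-order part $P_{\calA}^{\perp}\calA P_{\calA}^{\perp}$, while its first- and second-order parts become compact after left multiplication by $(P_{\calA}^{\perp}\calA P_{\calA}^{\perp})^{-1}$, exactly by hypothesis; so the case already treated, applied on the Hilbert space $\ker(\calA)^{\perp}$, shows this restricted pencil has only point spectrum of finite multiplicity with no finite accumulation point. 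Hence $\lambda\mapsto(P_{\calA}^{\perp}\calP_{2}(\lambda)P_{\calA}^{\perp})^{-1}$ is meromorphic on $\C$ with poles of finite order accumulating only at infinity, so $\vK_{2}(\lambda)$ is a finite matrix of such functions and $\det[\vK_{2}(\lambda)]$ is meromorphic with the same structure --- and not identically zero, since $\calP_{2}(\lambda)$ is invertible for $|\lambda|$ large.

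It then remains to read off $\sigma(\calP_{2})$: a value $\lambda$ is an eigenvalue iff either $\calP_{2}(\lambda)$ already fails to be invertible on $\ker(\calA)^{\perp}$ --- i.e.\ $\lambda$ lies in the spectrum of the restricted pencil, a discrete set with the required multiplicity and accumulation properties, and at such $\lambda$ the apparent pole of $\vK_{2}$ is removable with eigenfunction in $\ker(\calA)^{\perp}$ --- or $\det[\vK_{2}(\lambda)]=0$, which is likewise a discrete set accumulating only at infinity. For the second family, finite algebraic multiplicity follows from the identification of the algebraic multiplicity of $\lambda$ with the order of the zero of $\det[\vK_{2}(\lambda)]$, proved for linear pencils in \citep{kapitula:tks10} and extending verbatim to quadratic ones; that order is finite by meromorphy. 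Combining the two families yields the lemma.

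The main obstacle I anticipate is justifying the Feshbach reduction itself: one must verify that $P_{\calA}^{\perp}\calP_{2}(\lambda)P_{\calA}^{\perp}$ is boundedly invertible off a discrete set so that \eref{e:pp3} is meaningful, and then check that no eigenvalue is lost at the exceptional points --- showing these are precisely the removable-singularity points and matching them against the genuine spectrum of the restricted pencil. Transferring meromorphy from the resolvent of the restricted pencil to $\det[\vK_{2}]$, and matching order of vanishing with algebraic multiplicity, are comparatively routine once \citep{kapitula:tks10} is invoked.
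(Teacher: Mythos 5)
Your proposal follows the paper's own argument essentially verbatim: Markus's Theorem~12.9 handles the case $\ker(\calA)=\{0\}$ after left-multiplying by $\calA^{-1}$, and the nontrivial-kernel case is treated by the same Krein-matrix (Feshbach/Schur) reduction \eref{e:pp2}--\eref{e:pp5}, with the restricted pencil on $\ker(\calA)^{\perp}$ falling under the first case and multiplicities read off from the order of vanishing of $\det[\vK_2(\lambda)]$ via \citep{kapitula:tks10}. Your explicit flagging of the tacit hypotheses ($\rmz(\calA)<\infty$ and the need to justify invertibility of $P_{\calA}^{\perp}\calP_2(\lambda)P_{\calA}^{\perp}$ off a discrete set) is a fair and accurate reading of what the paper leaves implicit.
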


\begin{remark}\label{r:higherorderpencil}
Note that the proof of \autoref{l:noessential} did not require that any of
the operators have a symmetry property. Thus, this lemma can be thought of as
a general result about quadratic pencils. Indeed, although it will not be
proven here, the above argument can be extended to show that for
$n^\mathrm{th}$-order polynomial pencils of the form
\[
\calP_n(\lambda)=\sum_{j=0}^n\lambda^j\calA_j,
\]
if each of the operators
$(P_{\calA_0}^\perp\calA_0P_{\calA_0}^\perp)^{-1}P_{\calA_0}^\perp\calA_jP_{\calA_0}^\perp$
is compact for $j=1,\dots,n$, then the spectrum for this pencil will have
exactly the same properties as that for the quadratic pencil.
\end{remark}

\section{Main result: the instability index theorem}\label{s:3}

The goal here is to derive an instability index theorem for the quadratic
pencil.
In applications it may be the case that $\calC^{-1}$ is bounded and not
compact, and/or $\calB$ is not bounded. In order to overcome these technical
difficulties (which are associated with the proof only), take a positive
definite self-adjoint operator $\calS$ which has a compact inverse, and
consider the quadratic pencil
\[
\widehat{\calP}_2(\lambda)\coloneqq\widehat{\calA}+\lambda\widehat{\calB}+%
\lambda^2\widehat{\calC}.
\]
The operators in this new pencil are related to those of the original pencil
by
\[
\widehat{\calA}\coloneqq\calS^{-1}\calA\calS^{-1},\quad%
\widehat{\calB}\coloneqq\calS^{-1}\calB\calS^{-1},\quad%
\widehat{\calC}\coloneqq\calS^{-1}\calC\calS^{-1},
\]
so that
\[
\calP_2(\lambda)=\calS\widehat{\calP}_2(\lambda)\calS.
\]
The working assumption is that $\calB,\,\calC$ are $\calS$-compact, while
$\calA$ has $\calS$-compact resolvent, where an operator $\calT$ is said to
be $\calS$-compact if the operator $\calS^{-1}\calT\calS^{-1}$ is compact,
and is said to have $\calS$-compact resolvent if the inverse
$\calS\calT^{-1}\calS$ (defined to act on the range space) is compact.

Define the space $X_\calS$ by
\[
X_\calS\coloneqq\{u\in X:\langle\calS u,\calS u\rangle<\infty\}.
\]
It will be assumed that $X_\calS\subset X$ is dense. Since $\calS^{-1}$ is
compact, and therefore bounded, it is clear that
$\sigma(\widehat{\calP}_2)\subset\sigma(\calP_2)$ on $X$. It is also clear
that $\sigma(\calP_2)$ when considered on the space $X_\calS$ is a subset of
$\sigma(\widehat{\calP}_2)$ when considered on the space $X$. In other words,
it is true that
\[
\sigma(\calP_2)\,\,\mathrm{on}\,\,X_\calS\subset%
\sigma(\widehat{\calP}_2)\,\,\mathrm{on}\,\,X\subset%
\sigma(\calP_2)\,\,\mathrm{on}\,\,X.
\]
Since $X_\calS\subset X$ is dense, by
\citep[Proposition~3.2]{shkalikov:opa96} it will be the case that
\[
\sigma(\calP_2)\,\,\mathrm{on}\,\,X_\calS=%
\sigma(\widehat{\calP}_2)\,\,\mathrm{on}\,\,X.
\]
Finally, since $\calS^{-1}$ is bounded, it is true that
$\rmn(\calA)=\rmn(\widehat{\calA})$ and $\rmn(\calC)=\rmn(\widehat{\calC})$.
In conclusion, from this point forward the ``hat"'s associated with the
operators can be dropped, and it will be assumed that the operators satisfy
$\calA^{-1},\,\calC,\,\calB$ are compact.

The instability index theorem for the quadratic pencil will be proven by
constructing an equivalent linear pencil, and then deriving an index theorem
for that linear pencil. Upon setting $w=(u,\lambda\calC u)^\rmT$, the
quadratic pencil \eref{e:i1} is linearized to become
\begin{equation}\label{e:h1}
(\calL-\lambda\calJ^{-1})w=0,
\end{equation}
where
\[
\calL=\left(\begin{array}{cc}\calA&0\\0&\calC^{-1}\end{array}\right),\quad%
\calJ=\left(\begin{array}{rr}0&\id\\-\id&-\calB\end{array}\right).
\]
Here is where the assumption that $\calC$ be invertible comes into play.
Since $\calB$ is skew-symmetric, \eref{e:h1}, and consequently the pencil
\eref{e:i1}, is formally equivalent to the Hamiltonian eigenvalue problem
\begin{equation}\label{e:h2}
\calJ\calL w=\lambda w,
\end{equation}
where $\calJ$ is skew-symmetric and $\calL$ is self-adjoint. Note that in
this formulation
\begin{enumerate}
\item $\calL$ has compact resolvent
\item $\calJ$ has bounded inverse
\item the spectrum satisfies the symmetry
$\{\lambda,-\overline{\lambda}\}\subset\sigma(\calJ\calL)$, and if all of
the operators have zero imaginary part, the symmetry becomes
$\{\pm\lambda,\pm\overline{\lambda}\}\subset\sigma(\calJ\calL)$.
\end{enumerate}

Since $\calC$ is invertible, it is clear that for nonzero $\lambda$ the two
eigenvalue problems have identical eigenvalues; furthermore, the geometric
multiplicities match. As it will be seen, it is also the case that the
algebraic multiplicities of these eigenvalues also coincide. We will show
that this is true at the origin: the proof will clearly generalize to the
case of a nonzero eigenvalue. We must consider the structure of
$\gker(\calJ\calL)$ and the manner in which it relates to
$\gker(\calP_2(0))$.

First consider $\gker(\calP_2(0))$. It is clear that
$\ker(\calP_2(0))=\ker(\calA)$. Following \citet{markus:itt88}, generalized
eigenfunctions are found by solving
\[
\calP_2(0)a_1+\calP_2'(0)a_0=0,\quad a_0\in\ker(\calA).
\]
This is equivalent to solving
\begin{equation}\label{e:h3}
\calA a_1=-\calB a_0,
\end{equation}
which by the Fredholm alternative has a nontrivial solution if and only if
$\calB a_0\in\ker(\calA)^\perp$. Since $\calB$ is skew-symmetric, it is not
unreasonable to assume that $\calB|_{\ker(\calA)}=\vn$. In this case there is
a solution to \eref{e:h3} for any $a_0\in\ker(\calA)$, and a full set of
associated eigenfunctions is given by $\calA^{-1}\calB\ker(\calA)$, where the
obvious notation
\[
\calA^{-1}\calB\ker(\calA)\coloneqq\{\calA^{-1}\calB a:a\in\ker(\calA)\}
\]
is being used. The next set of eigenfunctions is found by solving
\[
\calP_2(0)a_2+\calP_2'(0)a_1+\frac12\calP_2''(0)a_0=0,\quad%
a_0\in\ker(\calA),\,\,a_1=-\calA^{-1}\calB a_0\in\calA^{-1}\calB\ker(\calA).
\]
This equation is equivalent to
\begin{equation}\label{e:h4}
\calA a_2=-\calB a_1-\calC a_0=%
-(\calC-\calB\calA^{-1}\calB)a_0.
\end{equation}
Again using the Fredholm alternative, it is seen that there is a nontrivial
solution to \eref{e:h4} if and only if
$(\calB\calA^{-1}\calB-\calC)a_0\in\ker(\calA)^\perp$. Upon using the fact
that for any $a\in\ker(\calA)$,
\[
\langle\calB(\calA^{-1}\calB a_0),a\rangle=-\langle\calA^{-1}\calB a_0,\calB a\rangle,
\]
it is seen that if
\[
\vD\coloneqq%
(\calC-\calB\calA^{-1}\calB)|_{\ker(\calA)}
\]
is nonsingular, then $(\calC-\calB\calA^{-1}\calB)a_0\notin\ker(\calA)^\perp$
for any $a_0\in\ker(\calA)$. It will henceforth be assumed that $\vD$ is
nonsingular.

Now consider $\gker(\calJ\calL)$. It is clear that
$\ker(\calL)=(\ker(\calA),0)^\rmT$. 
Since
\begin{equation}\label{e:h4a}
\calJ^{-1}\ker(\calL)=%
\left\{\left(\begin{array}{c}-\calB a_0\\a_0\end{array}\right):a_0\in\ker(\calA)\right\},
\end{equation}
upon writing $w=(u,v)^\rmT$ the generalized eigenfunctions are found by
solving
\[
\calA u=-\calB a_0,\quad\calC^{-1}v=a_0.
\]
The first equation is precisely \eref{e:h3}, which was seen to have a
solution for any $a_0\in\ker(\calA)$. Consequently, there is a generalized
eigenspace at $\lambda=0$ which is given by $\{(-\calA^{-1}\calB a_0,\calC
a_0)^\rmT:a_0\in\ker(\calA)\}$. Since
\[
\calJ^{-1}\left(\begin{array}{c}-\calA^{-1}\calB a_0\\\calC a_0\end{array}\right)=%
-\left(\begin{array}{c}(\calC-\calB\calA^{-1}\calB)a_0\\\calA^{-1}\calB a_0\end{array}\right),
\]
the next set of generalized eigenfunctions is found by solving
\[
\calA u=-(\calC-\calB\calA^{-1}\calB)a_0,\quad%
\calC^{-1}v=-\calA^{-1}\calB a_0.
\]
The first equation is precisely \eref{e:h4}, which was seen to have no
solution under the assumption that $\vD$ is nonsingular.

It is now seen that regarding the algebraic multiplicity of the eigenvalue
the Hamiltonian linearization \eref{e:h2} is equivalent to the pencil at
$\lambda=0$. Following the same argument for nonzero $\lambda$ it is not
difficult to check that this equivalence continues to hold; namely, the
location of the eigenvalues, and their multiplicities, are the same for the
two systems.

We are now ready to derive the instability index for the linear Hamiltonian
system \eref{e:h2}. We must first construct the appropriate closed subspace
on which both $\calJ,\,\calL$ are nonsingular. Recalling that
$P_\calA^\perp:X\mapsto\ker(\calA)^\perp$ is the orthogonal projection, let
$\Pi_\calL^\perp:X\times X\mapsto\ker(\calA)^\perp\times X=\ker(\calL)^\perp$
be given by
\[
\Pi_\calL^\perp\coloneqq\left(\begin{array}{cc}P_\calA^\perp&0\\0&\id\end{array}\right);
\]
in other words, for $w=(u,v)\in X\times X$ it is true that $\Pi_\calL^\perp
w=(P_\calA^\perp u,v)$.
Define another orthogonal projection by
\begin{equation}\label{e:h5a}
\Pi_{\calJ^{-1}\ker(\calL)}^\perp:X\times X\mapsto[\calJ^{-1}\ker(\calL)]^\perp.
\end{equation}
Because $\ker(\calL)\perp\calJ^{-1}\ker(\calL)$, the projections
$\Pi_\calL^\perp,\,\Pi_{\calJ^{-1}\ker(\calL)}^\perp$ commute. Upon setting
$\Pi\coloneqq\Pi_\calL^\perp\Pi_{\calJ^{-1}\ker(\calL)}^\perp$ (note that
$\Pi$ being the composition of self-adjoint commuting operators implies that
it too is self-adjoint), nonzero eigenvalues for the linearization
\eref{e:h2} are found by solving
\begin{equation}\label{e:h6}
\Pi\calJ\Pi\cdot\Pi\calL\Pi\cdot\Pi w=\lambda\Pi w,\quad%
\Pi w\in[\ker(\calL)\oplus\calJ^{-1}\ker(\calL)]^\perp
\end{equation}
(e.g., see \citep[Section~2]{deconinck:ots10}). This is the eigenvalue
problem to be studied in the rest of this section.

The goal is to now count the total number of eigenvalues in the open
right-half of the complex plane (counting multiplicity), along with a those
eigenvalues on the imaginary axis which have negative Krein index. For each
$\lambda\in\rmi\R$ let $E_\lambda$ denote the generalized eigenspace. The
negative Krein index of the eigenvalue for the linearized system \eref{e:h2}
is given by
\[
k_\rmi^-(\lambda)\coloneqq\rmn(\calL|_{E_\lambda}),
\]
and if $k_\rmi^-(\lambda)=0$, then the eigenvalue is (often) said to have
positive Krein signature. Let us now relate this definition to a definition
using the quadratic pencil. By the definition of $w$ leading to the
linearization \eref{e:h1} one has that
\[
\calL|_{E_{\lambda}}=(\calA+|\lambda|^2\calC)|_{\Pi_1E_{\lambda}}=
(\calA-\lambda^2\calC)|_{\Pi_1E_{\lambda}},
\]
where $\Pi_1:X\times X\mapsto X$ is the projection onto the first component,
i.e., $\Pi_1(u,v)^\rmT=u$, and the second equality follows from the fact that
$\lambda\in\rmi\R$. Now, $\lambda$ being an eigenvalue with associated
eigenfunction $u$ means that
\[
\calP_2(\lambda)u=0\quad\Rightarrow\quad
\calA u=-\lambda\calB u-\lambda^2\calC u,
\]
which in turn implies
\[
(\calA-\lambda^2\calC)|_{\Pi_1E_{\lambda}}=%
-\lambda\calP_2'(\lambda)|_{\Pi_1E_{\lambda}}.
\]
In conclusion, the negative Krein index for the quadratic pencil is defined
to be
\begin{equation}\label{e:kindex}
k_\rmi^-(\lambda)\coloneqq\rmn(-\lambda\calP_2'(\lambda)|_{E_{\lambda}}),
\end{equation}
where now $E_\lambda=\gker(\calP_2(\lambda))$, i.e., the generalized
eigenspace of the quadratic pencil $\calP_2(\lambda)$ associated with the
eigenvalue $\lambda\in\rmi\R$.

We are now ready to derive the index formula. For the eigenvalue problem
\eref{e:h6} let $k_\rmr$ represent the number of positive real-valued
eigenvalues (counting multiplicity), and let $k_\rmc$ be the number of
complex-valued eigenvalues (counting multiplicity) with positive real part.
Furthermore, let the total negative Krein index be given by
\[
k_\rmi^-\coloneqq\sum_{\sigma(\calP_2(\lambda))\cap\rmi\R}k_\rmi^-(\lambda).
\]
Regarding the eigenvalue problem \eref{e:h6} it is known that
$(\Pi\calL\Pi)^{-1}$ is compact, and that the operator $\Pi\calJ\Pi$ is
bounded with bounded inverse. Indeed, since one can write
\[
\Pi\calJ\Pi=\Pi\left(\begin{array}{rr}0&\id\\-\id&0\end{array}\right)\Pi+
\Pi\left(\begin{array}{rr}0&0\\0&-\calB\end{array}\right)\Pi,
\]
one actually has that both $\Pi\calJ\Pi$ and $(\Pi\calJ\Pi)^{-1}$ can be
written as the sum of a bounded operator and a compact operator. Thus, upon
using the fact that compact operators are uniformly approximated by matrices,
when computing an index which takes into the account the (finite) number of
negative directions of an operator it is sufficient to consider the case of
matrices only. For the eigenvalue problem \eref{e:h6} when the operators are
matrices it is known from \citep{haragus:ots08} that
\begin{equation}\label{e:h9}
k_\rmr+k_\rmc+k_\rmi^-=\rmn(\Pi\calL\Pi).
\end{equation}
Since all of the quantities are integer-valued, by taking the limit one
deduces that the result holds for the full operators.

Before stating the final result, the quantity $\rmn(\Pi\calL\Pi)$ must be
computed in terms of the original operator $\calL$. It is known (e.g., see
\citep[Index Theorem]{kapitula:sif12}) that
\begin{equation}\label{e:h7}
\rmn(\Pi\calL\Pi)=\rmn(\calL)-\rmn(\calL^{-1}|_{\calJ^{-1}\ker(\calL)}).
\end{equation}
It is clearly the case that
\[
\rmn(\calL)=\rmn(\calA)+\rmn(\calC).
\]
Furthermore, it is straightforward to verify that
\[
\calL^{-1}|_{\calJ^{-1}\ker(\calL)}=
(\calC-\calB\calA^{-1}\calB)|_{\ker(\calA)}.
\]
The instability index \eref{e:h9} can then be rewritten as
\begin{equation}\label{e:h8}
\rmn(\Pi\calL\Pi)=\rmn(\calA)+\rmn(\calC)-\rmn((\calC-\calB\calA^{-1}\calB)|_{\ker(\calA)}).
\end{equation}
Combining \eref{e:h9} with \eref{e:h8} yields the following theorem:

\begin{theorem}\label{thm:index}
Suppose that the operators satisfy the assumption of \autoref{l:noessential}.
Further suppose that there is a self-adjoint and positive operator $\calS$
such that
\begin{enumerate}
\item $X_\calS\coloneqq\{u\in X:\langle\calS u,\calS u\rangle<\infty\}\subset
X$ is dense
\item the operators $\calB,\calC$ are $\calS$-compact, and the operator
$\calA$ has an $\calS$-compact resolvent.
\end{enumerate}
Finally, assume that the operator $\calC$ is invertible. If
\begin{enumerate}
\item[(i)] $\calB|_{\ker(\calA)}=\vn_{\rmz(\calA)}$, and
\item[(ii)] $(\calC-\calB\calA^{-1}\calB)|_{\ker(\calA)}$ is invertible,
\end{enumerate}
then the total number of eigenvalues in the closed right-half of the complex
plane satisfies the instability index
\[
k_\rmr+k_\rmc+k_\rmi^-=\rmn(\calA)+\rmn(\calC)-%
\rmn((\calC-\calB\calA^{-1}\calB)|_{\ker(\calA)}).
\]
\end{theorem}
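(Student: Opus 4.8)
The plan is to assemble into one deduction the ingredients already set up above. First I would dispose of the technical regularization: replacing $\calP_2$ by $\widehat{\calP}_2=\widehat{\calA}+\lambda\widehat{\calB}+\lambda^2\widehat{\calC}$ with $\widehat{\calA}=\calS^{-1}\calA\calS^{-1}$, etc., the $\calS$-compactness hypothesis makes $\widehat{\calB},\widehat{\calC}$ compact and $\widehat{\calA}$ have compact resolvent; density of $X_\calS$ in $X$ together with \citep[Proposition~3.2]{shkalikov:opa96} identifies $\sigma(\calP_2)$ on $X_\calS$ with $\sigma(\widehat{\calP}_2)$ on $X$, and boundedness of $\calS^{-1}$ leaves $\rmn(\calA)$, $\rmn(\calC)$, and $\rmn((\calC-\calB\calA^{-1}\calB)|_{\ker(\calA)})$ unchanged. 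So I may assume from the start that $\calA^{-1},\calB,\calC$ are compact, whence \autoref{l:noessential} applies and the spectrum of the pencil is discrete with finite algebraic multiplicities.

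Next, linearize: with $w=(u,\lambda\calC u)^\rmT$ the pencil becomes the Hamiltonian eigenvalue problem $\calJ\calL w=\lambda w$ of \eref{e:h2}, in which $\calL=\diag(\calA,\calC^{-1})$ is self-adjoint with compact resolvent and $\calJ$ is skew-symmetric with bounded inverse. The substantive part of this step is the claim that $\sigma(\calJ\calL)=\sigma(\calP_2)$ \emph{with algebraic multiplicities counted}. Since $\calC$ is invertible the geometric multiplicities evidently coincide for $\lambda\neq0$, and the Jordan-chain comparison is the Fredholm-alternative computation displayed just above the theorem, carried out at $\lambda=0$ and then shifted to a general base point (with $\calP_2'$ and $\frac12\calP_2''$ playing the roles of $\calB$ and $\calC$). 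There hypothesis (i), $\calB|_{\ker(\calA)}=\vn$, is exactly what makes the first generalized eigenspace exist and equal $\calA^{-1}\calB\ker(\calA)$, matching \eref{e:h4a}; and hypothesis (ii), invertibility of $\vD=(\calC-\calB\calA^{-1}\calB)|_{\ker(\calA)}$, is exactly what makes the chain terminate there. This also yields the pencil form \eref{e:kindex} of the negative Krein index.

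The remaining steps are the ones sketched in the text. Because $\ker(\calL)\perp\calJ^{-1}\ker(\calL)$, the orthogonal projections $\Pi_\calL^\perp$ and $\Pi_{\calJ^{-1}\ker(\calL)}^\perp$ commute, so $\Pi=\Pi_\calL^\perp\Pi_{\calJ^{-1}\ker(\calL)}^\perp$ is self-adjoint and the nonzero eigenvalues of $\calJ\calL$ are the eigenvalues of $\Pi\calJ\Pi\cdot\Pi\calL\Pi$ on $[\ker(\calL)\oplus\calJ^{-1}\ker(\calL)]^\perp$, as in \eref{e:h6}; here $(\Pi\calL\Pi)^{-1}$ is compact while $\Pi\calJ\Pi$ and its inverse are each a fixed bounded invertible operator plus a compact one. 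Since $k_\rmr,k_\rmc,k_\rmi^-$ and $\rmn(\Pi\calL\Pi)$ are non-negative integers and compact operators are uniform limits of matrices, it suffices to prove $k_\rmr+k_\rmc+k_\rmi^-=\rmn(\Pi\calL\Pi)$ of \eref{e:h9} in the matrix case and then pass to the limit — and for matrices this is the index theorem of \citep{haragus:ots08}. Finally, the Index Theorem of \citep[Index Theorem]{kapitula:sif12} gives \eref{e:h7}, $\rmn(\calL)=\rmn(\calA)+\rmn(\calC)$ follows from the block structure and invertibility of $\calC$, and a short computation using \eref{e:h4a}, skew-symmetry of $\calB$, and (i) shows $\calL^{-1}|_{\calJ^{-1}\ker(\calL)}=(\calC-\calB\calA^{-1}\calB)|_{\ker(\calA)}$; substituting into \eref{e:h9} produces \eref{e:h8}, which is the asserted formula.

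I expect the only genuinely delicate point to be the multiplicity bookkeeping of the second step: one must verify that the Jordan structure of $\calJ\calL$ at \emph{every} eigenvalue — with $\ker(\calA)$ contributing both through the $(1,1)$ block $\calA$ and through the $(2,2)$ block $\calC^{-1}$ — is recorded faithfully by $\gker(\calP_2(\lambda))$, and that hypotheses (i)--(ii) are precisely the nondegeneracy conditions forcing the two sets of chains to have equal lengths. Everything downstream is either the cited finite-dimensional index formula or a mechanical negative-index computation.
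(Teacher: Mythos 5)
Your proposal reproduces the paper's own argument essentially step for step: the $\calS$-regularization to reduce to compact $\calA^{-1},\calB,\calC$, the linearization $w=(u,\lambda\calC u)^\rmT$ to the Hamiltonian problem \eref{e:h2} with the Fredholm-alternative comparison of Jordan chains under hypotheses (i)--(ii), the projected problem \eref{e:h6}, the matrix-approximation limit to obtain \eref{e:h9} from \citep{haragus:ots08}, and the negative-index computation \eref{e:h7}--\eref{e:h8} via \citep{kapitula:sif12}. This is correct and is the same proof as in the paper, including your identification of the multiplicity bookkeeping as the delicate point.
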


\begin{remark}
If $\calA$ is nonsingular, then \autoref{thm:index} is precisely the result
of \citep[Corollary~3.9]{shkalikov:opa96}. On the other hand, in the event
that $\calA$ has a nontrivial kernel then it is an improvement over
\citep[Theorem~4.2]{shkalikov:opa96}, where it was shown that
\[
k_\rmr+k_\rmc+k_\rmi^-\le\rmn(\calA)+\rmn(\calC).
\]
The proof presented here is quite different than that given in
\citep{shkalikov:opa96}; in particular, in that paper the analysis takes
place on Pontryagin spaces, and the linearization that is studied there is
not the Hamiltonian linearization of \eref{e:h2}.
\end{remark}

\begin{remark}
If the imaginary part of all of the operators is zero, then due to the
Hamiltonian eigenvalue symmetry
$\{\pm\lambda,\pm\overline{\lambda}\}\subset\sigma(\calP_2)$ it is
necessarily the case that $k_\rmc$ and $k_\rmi^-$ are even-value. Thus, under
this assumption $\rmn(\Pi\calL\Pi)$ being odd automatically implies that
$k_\rmr\ge1$. On the other hand, if one or more of the operators has a
nontrivial imaginary part, then the Hamiltonian eigenvalue symmetry reduces
to $\{\lambda,-\overline{\lambda}\}\subset\sigma(\calP_2)$, and no such
conclusion can be drawn.
\end{remark}

\begin{remark}
One consequence of the index is that all but a finite number of eigenvalues
are purely imaginary; furthermore, the purely imaginary eigenvalues have
positive Krein signature if the modulus is sufficiently large.
\end{remark}

\begin{remark}
The proof of the index formula \eref{e:h9} in \citep{haragus:ots08} first
required the SCS Basis Lemma, in which it was shown that the generalized
eigenvectors associated with the linearization \eref{e:h1} formed a basis.
Technical assumptions on the operators were needed in order for the SCS Basis
Lemma to hold true. Unfortunately, at least in the application discussed
later in this paper these technical assumptions do not hold; hence, the
alternate proof via the limiting argument.
\end{remark}

\section{Applications to second-order in time Hamiltonian systems}\label{s:4}

As discussed in the introduction, quadratic operator pencils arise naturally
in when one studies the stability of solutions to second-order in time
Hamiltonian systems.  In this section, we present two applications of the
general theory developed in the previous sections in precisely this context.
We begin by considering the stability of periodic waves in an (abstract)
nonlinear wave equation posed in a Hilbert space, and conclude with a
stability analysis for periodic waves in the so-called ``good" Boussinesq
equation.

\subsection{Example: stability in (abstract) nonlinear wave equations}\label{s:wave}
One important example of quadratic pencils arises in the study of
second-order (in time) Hamiltonian systems (for a specific case of the
following discussion, see, e.g., \citep[Section~7]{grillakis:sto90}).
Consider a wave equation of the form
\begin{equation}\label{e:sh1}
\partial_t^2u+\calH'(u)=0,\quad%
\calH^{(k)}(u)\coloneqq\frac{\delta^k\calH}{\delta u^k}(u),
\end{equation}
where $u\in X$, which is a Hilbert space with inner-product
$\langle\cdot,\cdot\rangle$. The Hamiltonian $\calH:X\mapsto\R$ is assumed to
be smooth.

It will be assumed that the Hamiltonian system has symmetries. Let $G$ be a
finite-dimensional abelian Lie group with Lie algebra $\gl$. Denote by
$\exp(\omega)=\rme^\omega$ for $\omega\in\gl$ the exponential map from $\gl$
into $G$, and assume that $\calT:G\mapsto L(V)$, where $X\subset V\subset
X^*$ (the dual space of $X$), is a unitary representation of $G$ on $V$. It
is then the case that $\calT'(e)$ maps $\gl$ into the space of closed
skew-symmetric operators on $V$ with domain $X$. The notation
$\calT_\omega\coloneqq\calT'(e)\omega$ for $\omega\in\gl$ will be used to
denote the linear skew-symmetric operator which is the generator of the
semigroup $\calT(\rme^{\omega t})$. Using this notation the symmetry
assumption becomes that the Hamiltonian satisfies
$\calT(\omega)\calH(u)=\calH(\calT(\omega)u)$ for all $\omega\in\gl$.

Writing $\vu=(u,v)^\rmT$, where $v=\partial_tu\in X_1$ (in applications, it
is often the case that $X\subset X_1$ is dense), the system \eref{e:sh1} can
be written on $X\times X_1$ as the first-order Hamiltonian system
\begin{equation}\label{e:sh2}
\partial_t\vu=\calJ\widehat{\calH}'(\vu),
\end{equation}
where
\[
\calJ=\left(\begin{array}{rr}0&\id\\-\id&0\end{array}\right),\quad
\widehat{\calH}(\vu)=\calH(u)+\frac12\langle v,v\rangle.
\]
The system \eref{e:sh2} is invariant under the action
$\widehat{\calT}(\vomega)$, where
\[
\widehat{\calT}(\vomega)\vu=%
\left(\begin{array}{rr}\calT(\vomega)&0\\0&\calT(\vomega)\end{array}\right)\vu.
\]
An $n$-parameter family of conserved quantities for the Hamiltonian system
\eref{e:sh2} is induced from the self-adjoint operator
$\calJ^{-1}\widehat{\calT}_\omega$, and is given by
\[
\calQ(\vu)\coloneqq\frac12\langle\calJ^{-1}\widehat{\calT}_\omega\vu,\vu\rangle=%
-\Re\left(\langle\calT_\omega u,v\rangle\right).
\]

Upon defining the Lagrangian
\[
\Lambda(\vu)\coloneqq\widehat{\calH}(\vu)+\calQ(\vu),
\]
waves to \eref{e:sh2} will be realized as steady-state solutions for the
system
\begin{equation}\label{e:sh3}
\partial_t\vu=\calJ\Lambda'(\vu),
\end{equation}
i.e., they are critical points for the Lagrangian. Since
\[
\Lambda'(\vu)=\widehat{\calH}'(\vu)+%
\calJ^{-1}\widehat{\calT}_\omega(\vu)=%
\left(\begin{array}{c}\calH'(u)+\calT_\omega v\\v-\calT_\omega u\end{array}\right),
\]
critical points are solutions to
\begin{equation}\label{e:sh4}
\calH'(u)+\calT_\omega^2u=0,\quad%
\calT_\omega^2u\coloneqq\calT_\omega(\calT_\omega u).
\end{equation}
It should be noted here that \eref{e:sh3} is equivalent to the second-order
problem
\begin{equation}\label{e:sh4a}
\partial_t^2u+2\calT_\omega\partial_tu+\calH'(u)+\calT_\omega^2u=0.
\end{equation}


Suppose that $u=U$ is a solution to \eref{e:sh4} (the $\omega$-dependence of
the solution is being suppressed here), so that $\vU=(U,\calT_\omega U)^\rmT$
is a critical point of the Lagrangian. Indeed, further suppose that there is
a nonempty open set $\Omega\subset\gl$ such that the solution is smooth in
$\omega$ for all $\omega\in\Omega$, and further assume that the isotropy
subgroups $\{g\in G:\calT(g)U=U\}$ are discrete for all $\omega$. Now
consider the spectral and orbital stability of the wave. The linearized
problem associated with \eref{e:sh3} is given by
\begin{equation}\label{e:sh5}
\partial_t\vu=\calJ\calL\vu,
\end{equation}
where the self-adjoint operator $\calL$ is
\[
\calL\coloneqq\Lambda''(\vU)=%
\left(\begin{array}{cc}\calH''(U)&\calT_\omega\\-\calT_\omega&\id\end{array}\right).
\]
The eigenvalue problem for \eref{e:sh5} is given by
\[
\calJ\calL\vu=\lambda\vu.
\]
This eigenvalue problem is the system
\[
-\calH''(U)u-\calT_\omega v=\lambda v,\quad%
-\calT_\omega u+v=\lambda u,
\]
which after substitution is equivalent to the quadratic pencil
\begin{equation}\label{e:sh6}
(\calH''(U)+\calT_\omega^2+2\lambda\calT_\omega+\lambda^2\id)u=0.
\end{equation}
In the notation of \eref{e:i1} one has
\[
\calA=\calH''(\phi)+\calT_\omega^2,\quad%
\calB=2\calT_\omega,\quad\calC=\id.
\]
Note that the operators $\calA,\,\calC$ are self-adjoint, while the operator
$\calB$ is skew-symmetric. It is interesting to note that the negative index
of $\calL$ is discussed in \citep[Lemma~1]{kostenko:otd02}, where it is
stated that
\[
\rmn(\calL)=\rmn(\calH''(\phi)+\calT_\omega^2).
\]
The number of negative directions of $\calL$ is precisely the number of
negative directions associated with the linearization of \eref{e:sh4} about
$u=U$.

With respect to the spectrum of the pencil \eref{e:sh6} the result of
\autoref{thm:index} says the following. The assumptions associated with the
symmetries present in the problem imply that
\[
\ker(\calH''(U)+\calT_\omega^2)=\Span\{\calT_\omega U\},
\]
so that $\rmz(\calH''(U)+\calT_\omega^2)=n$. Furthermore, these assumptions
imply that
\[
\calT_\omega:\ker(\calH''(U)+\calT_\omega^2)\mapsto\ker(\calH''(U)+\calT_\omega^2)^\perp,
\]
so that the generalized kernel for the pencil has (at least) dimension $2n$.
Since $\calC=\id$, under the assumption that the matrix
\[
(\id-4\calT_\omega(\calH''(U)+\calT_\omega^2)^{-1}\calT_\omega)|_{\Span\{\calT_\omega U\}}
\]
is invertible, it will then be the case that the instability index count
satisfies
\begin{equation}\label{e:sh7}
k_\rmr+k_\rmc+k_\rmi^-=\rmn(\calH''(\phi)+\calT_\omega^2)-
\rmn\left((\id-4\calT_\omega(\calH''(U)+\calT_\omega^2)^{-1}\calT_\omega)|_{\Span\{\calT_\omega U\}}\right)
\end{equation}

Now that the spectral problem is understood, consider the orbital stability
of the wave. This result follows almost immediately for
\citep[Theorem~4.1]{grillakis:sto90}. An alternate interpretation of that
result is as follows. In the language of that paper the wave is said to be
orbitally stable if the reduced Hamiltonian, which is the Hamiltonian
restricted to the closed subspace orthogonal to the generalized kernel of
$\calJ\calL$, is positive definite. As was discussed in, e.g.,
\citep{deconinck:ots10,kapitula:ots07,deconinck:tos10}, this condition is
equivalent to saying that for the linearized problem \eref{e:sh5} the
spectrum is purely imaginary and satisfies $k_\rmi^-=0$. 
Under this spectral assumption, and the compactness assumptions associated
with the operators, the wave is then a local minimizer for the Lagrangian,
and hence is orbitally stable.

\begin{theorem}\label{thm:orbitalstable}
Suppose that for the quadratic pencil
\[
\calP_2(\lambda)\coloneqq(\calH''(U)+\calT_\omega^2)+\lambda(2\calT_\omega)+\lambda^2\id,
\]
which is the spectral problem for the linearization of the second-order
Hamiltonian system \eref{e:sh4a} about the steady-state $u=U$, the operators
satisfy the assumptions associated with \autoref{thm:index}. Assume that
solutions to \eref{e:sh4a} exist globally in time. If the eigenvalues satisfy
the instability index count $k_\rmr=k_\rmi^-=k_\rmc=0$ (see \eref{e:sh7}),
then the wave is orbitally stable. In other words, for each $\epsilon>0$
there is a $\delta>0$ such that if
\[
\|u(0)-U\|_X+\|\partial_t u(0)-\calT_\omega U\|_{X_1}<\delta,
\]
then
\[
\sup_{t>0}\inf_{g\in G}\left(\|u(t)-\calT(g)U\|_X+\|\partial_tu(t)-\calT(g)\calT_\omega U\|_{X_1}\right)<\epsilon.
\]
\end{theorem}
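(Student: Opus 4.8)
The plan is to deduce orbital stability of $u=U$ from the abstract orbital stability theory of Grillakis--Shatah--Strauss, as reformulated in terms of the linearized spectrum. First I would recall the setup: the wave $\vU=(U,\calT_\omega U)^\rmT$ is a critical point of the Lagrangian $\Lambda$ on $X\times X_1$, its generalized kernel under $\calJ\calL$ contains the $n$-dimensional tangent space of the symmetry group orbit (and, because of the second-order structure, has dimension $2n$ as noted above), and the linearized operator $\calL=\Lambda''(\vU)$ has finitely many negative directions by the compactness hypotheses. The key observation is that, as recalled in the discussion preceding the theorem (and in \citep{deconinck:ots10,kapitula:ots07,deconinck:tos10}), the GSS orbital stability criterion---positive definiteness of the Hamiltonian restricted to the subspace orthogonal to $\gker(\calJ\calL)$---is equivalent to the statement that the linearized spectrum is purely imaginary with $k_\rmi^-=0$, i.e. to $k_\rmr=k_\rmc=k_\rmi^-=0$.

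The main steps, in order, would be: (1) verify that the hypotheses of \autoref{thm:index} transfer to the present $\calA=\calH''(U)+\calT_\omega^2$, $\calB=2\calT_\omega$, $\calC=\id$, so that the index count \eref{e:sh7} is valid; in particular the symmetry assumptions give $\ker(\calA)=\Span\{\calT_\omega U\}$ and $\calB|_{\ker(\calA)}=\vn$, and the nondegeneracy hypothesis is exactly invertibility of the matrix $\vD$ appearing in \eref{e:sh7}. (2) Invoke the hypothesis $k_\rmr=k_\rmc=k_\rmi^-=0$ to conclude via \autoref{thm:index} that $\rmn(\calA)=\rmn\bigl((\id-4\calT_\omega(\calH''(U)+\calT_\omega^2)^{-1}\calT_\omega)|_{\Span\{\calT_\omega U\}}\bigr)$, equivalently $\rmn(\Pi\calL\Pi)=0$, so that $\calL$ restricted to the relevant co-kernel subspace is positive definite. (3) Translate this positivity into the positive-definiteness of the reduced Hamiltonian in the sense of \citep[Theorem~4.1]{grillakis:sto90}; here one uses that the conserved quantities $\calQ$ built from $\calJ^{-1}\widehat{\calT}_\omega$ furnish the Lagrange multipliers, that solutions exist globally in time (a hypothesis), and that the energy-momentum functional is conserved along the flow. (4) Run the standard Lyapunov argument: $\Lambda(\vu)-\Lambda(\vU)$ controls the distance to the group orbit from below near $\vU$, it is conserved, and small initial data keep $\vu(t)$ in the neighborhood where the coercivity estimate holds; this yields the claimed $\epsilon$--$\delta$ statement in the norm $\|u(t)-\calT(g)U\|_X+\|\partial_t u(t)-\calT(g)\calT_\omega U\|_{X_1}$.

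A few technical points require care. One must check that the abstract GSS framework genuinely applies: the representation $\calT$ is unitary on $V$ with $X\subset V\subset X^*$, the isotropy subgroups are discrete (so the orbit is a genuine $n$-manifold and the $n$ conserved quantities are functionally independent near $\vU$), and $\calH$ is smooth; these are exactly the running assumptions of the subsection. One must also confirm that the subspace appearing in \autoref{thm:index}, namely $[\ker(\calL)\oplus\calJ^{-1}\ker(\calL)]^\perp$, coincides with the complement of $\gker(\calJ\calL)$ used in the GSS stability criterion---this is where the equivalence ``$k_\rmi^-=0$ and spectrum imaginary $\iff$ reduced Hamiltonian positive'' is used, and it relies on the computation, carried out in \autoref{s:3}, that the algebraic and geometric multiplicities of the zero eigenvalue of $\calJ\calL$ match those of $\calP_2(0)$ and that $\dim\gker(\calJ\calL)=2n$ under invertibility of $\vD$.

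The hardest part will be step (3)---cleanly invoking \citep[Theorem~4.1]{grillakis:sto90} in this second-order, multi-symmetry setting, since that theorem is usually stated for first-order Hamiltonian systems and one must identify the right Lyapunov functional (the Lagrangian $\Lambda$ augmented by the full set of conserved momenta $\calQ$) and verify that the reduced Hamiltonian's positivity is precisely what the index count $\rmn(\Pi\calL\Pi)=0$ delivers. Once that identification is made, the remaining Lyapunov/compactness argument for orbital stability is standard and can be cited rather than reproduced. The global existence hypothesis is needed only to make ``$\sup_{t>0}$'' meaningful; without it one gets the conclusion on the maximal interval of existence.
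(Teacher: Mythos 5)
Your proposal is correct and follows essentially the same route as the paper: the paper's argument is precisely to invoke \citep[Theorem~4.1]{grillakis:sto90} after observing (citing \citep{deconinck:ots10,kapitula:ots07,deconinck:tos10}) that positive definiteness of the reduced Hamiltonian on the complement of $\gker(\calJ\calL)$ is equivalent to the spectral condition $k_\rmr=k_\rmc=k_\rmi^-=0$, which via \autoref{thm:index} and $\rmn(\calC)=\rmn(\id)=0$ gives $\rmn(\Pi\calL\Pi)=0$ and hence that the wave is a constrained local minimizer of the Lagrangian. Your write-up simply makes explicit the hypothesis-checking and the Lyapunov coercivity step that the paper leaves to the cited references.
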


\subsection{Example: periodic waves to the ``good" Boussinesq equation}\label{s:gbou}

The generalized ``good" Boussinesq equation (gB) is of the form
\begin{equation}\label{e:a51}
\partial_t^2u+\partial_x^2(\partial_x^2u-u+f(u))=0,
\end{equation}
where $f:\R\mapsto\R$ is smooth. In traveling coordinates, i.e., $\xi=x-ct$ with $c\in(-1,1)$,
the gB can be rewritten as
\begin{equation}\label{e:a52}
\partial_t^2u-2c\partial_{t\xi}^2u+\partial_\xi^2(\partial_\xi^2u-(1-c^2)u+f(u))=0.
\end{equation}
The interest will be on solutions to \eref{e:a52} which are $2L$-periodic in
$\xi$, i.e., $u(\xi+2L,t)=u(\xi,t)$.

In order to study the existence, spectral, and orbital stability problems, it
is convenient to recast the gB \eref{e:a51} in a Hamiltonian formulation
similar to that of \eref{e:sh2}. Herein this task will be accomplished via a
trick presented in \citep{bona:geo88}. The evolution is considered to take
place on the space $L^2_\mathrm{per}[-L,+L]$, i.e., the space of
square-integrable functions which are $2L$-periodic in $\xi$. The
inner-product is the standard one, i.e.,
\[
\langle f,g\rangle=\int_{-L}^{+L}f(x)\overline{g(x)}\,\rmd x.
\]
It is straightforward to check that the original gB \eref{e:a51} is
equivalent to the system
\begin{equation}\label{e:54}
\partial_t\vu=\calJ\widehat{\calH}'(\vu),\quad\vu=(u,v)^\rmT,
\end{equation}
where $\partial_xv=\partial_tu$,
\[
\calJ=\left(\begin{array}{cc}0&\partial_x\\\partial_x&0\end{array}\right),\quad
\widehat{\calH}(\vu)=\int_{-L}^{+L}\left[\frac12(\partial_xu)^2+\frac12u^2-F(u)
+\frac12v^2\right]\,\rmd x.
\]
Here $F'(u)=f(u)$. Note that the above formulation of $\widehat{\calH}$ is
consistent with the formulation of the previous section, i.e.,
\[
\widehat{\calH}(\vu)=\calH(u)+\frac12\langle v,v\rangle,\quad
\calH(u)=\int_{-L}^{+L}\left[\frac12(\partial_xu)^2+\frac12u^2-F(u)\right]\,\rmd x,
\]
while the skew-symmetric operator $\calJ$ no longer has the property of
having a bounded inverse. The system is invariant under spatial translation,
i.e., $\widehat{\calT}(\omega)\vu(x,t)=\vu(x+\omega,t)$. Consequently, upon
using the fact that on $\ker(\partial_x)^\perp$ it is true that
\[
\calJ^{-1}\widehat{\calT}_\omega=\left(\begin{array}{cc}0&1\\1&0\end{array}\right),
\]
the conserved quantity associated with the spatial translation is given by
\[
\calQ(\vu)=\langle u,v\rangle\qquad\left(=\frac12\partial_t\langle u,u\rangle\right),
\]
and the Lagrangian for the system is
\[
\Lambda(\vu)=\widehat{\calH}(\vu)+c\calQ(\vU)\quad\Rightarrow\quad
\Lambda'(\vu)=\left(\begin{array}{c}\calH'(u)+cv\\cu+v\end{array}\right).
\]
In conclusion, the system to be studied is
\begin{equation}\label{e:55}
\partial_t\vu=\calJ\Lambda'(\vu),
\end{equation}
which is equivalent to $\partial_t\vu=\widehat{\calH}'(\vu)$ in traveling
coordinates $\xi=x-ct$.

First consider the existence problem. Since $\ker(\partial_x)=\Span\{1\}$,
for real-valued parameters $a,b$ the problem is
\begin{equation}\label{e:56}
-\partial_x^2u+u-f(u)+cv=-a,\quad v+cu=b
\end{equation}
which is equivalent to
\begin{equation}\label{e:57a}
-\partial_x^2u+(1-c^2)u-f(u)=-(a+cb).
\end{equation}
This is a well-studied problem. In order to use the desired geometric
formulation of \citet{bronski:ait11}, it will first be necessary to rescale
the wave-speed via
\begin{equation}\label{e:defch}
\hat{c}\coloneqq1-c^2\quad\Rightarrow\quad
c=c_\pm\coloneqq\pm\sqrt{1-\hat{c}},
\end{equation}
so that \eref{e:57a} can be rewritten as
\begin{equation}\label{e:57}
-\partial_x^2u+\hat{c}u-f(u)=-(a+cb).
\end{equation}
Note that in \eref{e:57} the wave-speed $c$ is either of $c_\pm$. Without
loss of generality assume that $b=0$. A periodic steady-state, say $U$, will
be a solution to the ODE
\begin{equation}\label{e:53}
\partial_\xi^2U-\hat{c}U+f(U)=a;
\end{equation}
hence, the solution will naturally depend on the parameters $a$ and
$\hat{c}$. If one sets
\[
E=\frac12(\partial_\xi U)^2+V(U,a,\hat{c}),\quad
V(U,a,\hat{c})\coloneqq-aU-\frac12\hat{c}U^2+F(U),
\]
then under the assumption that $E,\,a,$ and $\hat{c}$ are chosen so that
\begin{enumerate}
\item $E=V(U,a,\hat{c})$ has (at least) two real roots $U_\pm$ with $U_-<U_+$
\item $V(U,a,\hat{c})<E$ for $U_-<U<U_+$
\end{enumerate}
(see \autoref{f:ExistenceCriteria}) there will be a periodic solution with
period $2L$, where
\[
L=\frac1{\sqrt{2}}\int_{U_-}^{U_+}\frac{\rmd U}{\sqrt{E-V(U,a,\hat{c})}}.
\]
As it will be seen, in particular examples the spectral stability of the
$2L$-periodic solution will naturally depend upon the parameters
$a,\hat{c},E$. The dependence of the solution on these parameters will be
implicit in all that follows.

\begin{figure}
\centering
\includegraphics{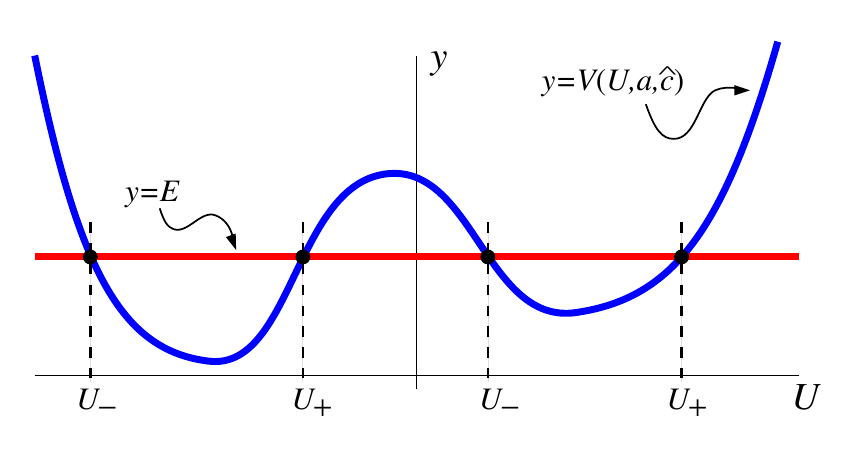}
\caption{(color online) The criteria that the potential $V(U,a,\hat{c})$ must satisfy relative
to the energy $E$ in order for there to exist spatially periodic solutions. Here the
energy was chosen so that there exist two distinct periodic solutions. Increasing the energy
past a threshold $E^*$, for which there are two homoclinic orbits, yields that the two
solutions merge into one periodic solution.}
\label{f:ExistenceCriteria}
\end{figure}

Now consider the stability problem. Let the $2L$-periodic wave found in
\eref{e:57} be denoted by $u=U$. Recalling that we set $b=0$, from
\eref{e:56} the $v$-component of the wave is given by $v=-cU$, where from
\eref{e:defch} one has $c=c_\pm$ can be of either sign for a fixed value of
$\hat{c}$. The steady-state solution for \eref{e:55} is then given by
\[
\vu=\vU=\left(\begin{array}{c}U\\-cU\end{array}\right).
\]
Under the mapping
\begin{equation}\label{e:58}
\vu=\vU+\vv
\end{equation}
the system \eref{e:55} becomes
\[
\partial_t\vv=\calJ\Lambda'(\vU+\vv).
\]
The evolution problem must now be considered on a space for which $\calJ$ has
bounded inverse, i.e., on the space of mean-zero functions. Let
$\Pi_0:L^2_\mathrm{per}[-L,+L]\mapsto H_0$ be the self-adjoint projection
operator
\[
\Pi_0u=u-\frac1{2L}\langle u,1\rangle;
\]
in other words, $\Pi_0$ is the orthogonal projection onto
$\ker(\partial_\xi)^\perp$. Here
\[
H_0\coloneqq\{u\in L^2_{\mathrm{per}}[-L,+L]:\langle u,1\rangle=0\}=
\ker(\partial_\xi)^\perp.
\]
When writing $\Pi_0\vv$ it will be implicitly assumed that $\Pi_0$ is being
applied to each component of $\vv$. In \citep[Section~2]{deconinck:ots10} it
is shown that the proper evolution equation to consider is
\begin{equation}\label{e:59}
\partial_t\vv=\calJ\Pi_0\Lambda'(\vU+\vv),\quad\vv(0)=\vv_0,
\end{equation}
where $\Pi_0\vv_0=\vv_0$ implies that $\Pi_0\vv(t)=\vv(t)$ for all $t>0$. In
other words, \eref{e:59} describes the evolution of mean-zero perturbations
of the underlying wave. Since the evolution occurs on $H_0\times H_0$, and
$\partial_x:H_0\mapsto H_0$ has bounded inverse, in this formulation the
operator $\calJ$ now has bounded inverse.

First consider the spectral stability problem. The linearized eigenvalue
problem
\[
\lambda\vv=\calJ\Pi_0\Lambda''(\vU)\vv,\quad\Pi_0\vv=\vv
\]
can be rewritten as
\[
\partial_x[\Pi_0\calH''(U)\Pi_0u+cv]=\lambda v,\quad\partial_x[cu+v]=\lambda u,
\]
where $u,v\in H_0$. Differentiating the first equation yields
\[
\partial_x^2\Pi_0\calH''(U)\Pi_0 u+c\partial_x(\partial_xv)=\lambda\partial_xv,
\]
and substituting the second equation into the first and simplifying gives the
quadratic pencil problem
\[
\left[\lambda^2-2c\lambda\partial_x+\partial_x^2(\Pi_0(-\calH''(U)+c^2)\Pi_0)\right]u=0.
\]
Since $u\in H_0$, one can write $v=\partial_x^{-1}u\in H_0$, so that the
pencil becomes
\[
\partial_x\left[\lambda^2-2c\lambda\partial_x-\partial_x(\Pi_0(\calH''(U)-c^2)\Pi_0)\partial_x\right]v=0.
\]
Since $\partial_x$ has bounded inverse, upon setting $\calL_2$ to be the
well-understood self-adjoint Hill operator
\[
\calL_2=-\partial_x^2+\hat{c}-f'(U(x)),
\]
the pencil problem to be studied is
\[
\left[\lambda^2-2c\lambda\partial_x-\partial_x(\Pi_0\calL_2\Pi_0)\partial_x\right]v=0,\quad
v\in H_0.
\]
Note that in the notation of the previous section,
\begin{equation}\label{e:59a}
\calC=\id,\quad\calB=-2c\partial_x,\quad\calA=-\partial_x(\Pi_0\calL_2\Pi_0)\partial_x.
\end{equation}

Before proceeding with the spectral analysis, the assumptions on the
operators given in \autoref{thm:index} must be verified. First consider
$\ker(\calA)$. Since $\calL_2(\partial_\xi U)=0$, it is true that
\[
\calL_2\Pi_0\cdot\partial_\xi(U-\overline{U})=0,\quad
\overline{U}=\frac1{2L}\int_{-L}^{+L}U(x)\,\rmd x.
\]
In other words, $U-\overline{U}\in\ker(\calA)$. In order for $\ker(\calA)$ to
have another linearly independent element, it must be the case that
$\calL_2^{-1}(1)\in H_0$. It will be henceforth assumed that no other element
in the kernel exists, i.e.,
\begin{equation}\label{e:59b}
\langle\calL_2^{-1}(1),1\rangle\neq0,
\end{equation}
so that
\[
\ker(\calA)=\Span\{U-\overline{U}\}.
\]
Letting
\[
P_\calA:H_0\mapsto\Span\{U-\overline{U}\},\quad%
P_\calA^\perp:H_0\mapsto\Span\{U-\overline{U}\}^\perp\subset H_0
\]
be orthogonal projections, it must be checked that
\[
(P_\calA^\perp\calA P_\calA^\perp)^{-1}P_\calA^\perp\calB P_\calA^\perp,\,%
(P_\calA^\perp\calA P_\calA^\perp)^{-1}P_\calA^\perp\calC P_\calA^\perp
\]
are compact operators. This immediately follows from the fact that
$(P_\calA^\perp\calA P_\calA^\perp)^{-1}$ is compact, and both $\calB$ and
$\calC$ are differentiable operators of lesser order than $\calA$.  Together, the above considerations
verify the hypothesis of \autoref{l:noessential}.

Next, we must verify that the operators $\calA$, $\calB$, and $\calC$ are $\calS$-compact for some
compact operator $\calS$.  For each $\alpha>0$, define the operator
\[
\calS_\alpha\coloneqq\Pi_0(\partial_x^2+1)^\alpha\Pi_0
\]
acting on $L^2_{\rm per}([-L,+L])$.
It is clear that $\calS_\alpha^{-1}$ is a compact self-adjoint operator on $H_0$ for each $\alpha>0$;
furthermore, it is true that the space
\[
X_{\calS_\alpha}=\{u\in H_0:\langle\Pi_0(\partial_x^2+1)^{2\alpha}\Pi_0u,u\rangle<\infty\}
\]
is dense for any $\alpha>0$.  Now, clearly the operator
$\calS_\alpha^{-1}\calC\calS_\alpha^{-1}=\calS_\alpha^{-2}$ is compact for any $\alpha>0$.
Regarding the operator $\calB$, it is easy to see that
$\calS_\alpha^{-1}\partial_x\calS_\alpha^{-1}$ will be compact as long as $1/4<\alpha$.
Finally, the operator $\calS_\alpha^{-1}\calA\calS_\alpha^{-1}$ will have a compact
resolvent as long as $0<\alpha<1$. In conclusion, as long as $1/4<\alpha<1$,
the operators will be $\calS_\alpha$-compact, thus verifying hypothesis (a) and (b) of \autoref{thm:index}.

From the skew-symmetry of the operator and the fact that $U$ is $2L$-periodic
it is clear that
\[
\calB|_{\ker(\calA)}=-2c\langle U-\overline{U},\partial_x(U-\overline{U})\rangle=0.
\]
Provided that $(\id-4c^2\partial_x\calA^{-1}\partial_x)|_{\ker(\calA)}$ is invertible then,
a direct application of \autoref{thm:index}, using the explicit form of the operators
given in \eref{e:59a} and noting that $\calC=\id$ is clearly a positive definite
operator, implies that the index count satisfies
\[
k_\rmr+k_\rmc+k_\rmi^-=\rmn(\calA)-%
\rmn((\id-4c^2\partial_x\calA^{-1}\partial_x)|_{\ker(\calA)}).
\]
In order to compute $\rmn(\calA)$, first note that
\[
\langle u,\calA u\rangle=\langle u,-\partial_x(\Pi_0\calL_2\Pi_0)\partial_x u\rangle=
\langle\partial_x u,\Pi_0\calL_2\Pi_0(\partial_x u)\rangle.
\]
Thus, upon using the fact that $\partial_x:H_0\mapsto H_0$ has a bounded
inverse it is clear that
\[
\rmn(\calA)=\rmn(\Pi_0\calL_2\Pi_0).
\]
Regarding the quantity on the right, it was shown in
\citep[equation~(2.25)]{deconinck:ots10} that if the inequality of
\eref{e:59b} holds, then
\[
\rmn(\Pi_0\calL_2\Pi_0)=\rmn(\calL_2)-\rmn(\langle\calL_2^{-1}(1),1\rangle).
\]
Consequently, it can now be said that
\begin{equation}\label{e:510}
\rmn(\calA)=\rmn(\calL_2)-\rmn(\langle\calL_2^{-1}(1),1\rangle),
\end{equation}
so that the index count satisfies
\[
k_\rmr+k_\rmc+k_\rmi^-=\rmn(\calL_2)-\rmn(\langle\calL_2^{-1}(1),1\rangle)-%
\rmn((\id-4c^2\partial_x\calA^{-1}\partial_x)|_{\ker(\calA)}).
\]

Recalling that $c^2=1-\hat{c}$, the index count is complete once the scalar
\[
(\id-4(1-\hat{c})\partial_x\calA^{-1}\partial_x)|_{\ker(\calA)}
\]
is computed. From \eref{e:59a} one has that
\[
\begin{split}
\id-4(1-\hat{c})\partial_x\calA^{-1}\partial_x&=\id+
4(1-\hat{c})\partial_x\cdot
\partial_x^{-1}(\Pi_0\calL_2\Pi_0)^{-1}\partial_x^{-1}
\cdot\partial_x\\
&=\id+4(1-\hat{c})(\Pi_0\calL_2\Pi_0)^{-1}:
\end{split}
\]
the second line follows from the fact that $\partial_x$ is invertible on
$H_0$. Using the characterization of $\ker(\calA)$, it is then seen that
\[
(\id-4(1-\hat{c})\partial_x\calA^{-1}\partial_x)|_{\ker(\calA)}=
\langle U-\overline{U},U-\overline{U}\rangle+4(1-\hat{c})
\langle(\Pi_0\calL_2\Pi_0)^{-1}(U-\overline{U}),U-\overline{U}\rangle.
\]
Finally, in the study of the orbital stability of periodic waves for the
generalized Korteweg-de Vries equation (gKdV) it was shown in
\citep[Section~3]{deconinck:ots10} that
\[
\langle(\Pi_0\calL_2\Pi_0)^{-1}(U-\overline{U}),U-\overline{U}\rangle=
D_{\mathrm{gKdV}},\quad
D_{\mathrm{gKdV}}\coloneqq\frac{\left|%
\begin{array}{cc}%
\langle\calL_2^{-1}(U),U\rangle&\langle\calL_2^{-1}(U),1\rangle\\
\langle\calL_2^{-1}(U),1\rangle&\langle\calL_2^{-1}(1),1\rangle
\end{array}%
\right|}%
{\langle\calL_2^{-1}(1),1\rangle}.
\]

\begin{lemma}\label{lem:51}
Consider the quadratic pencil \eref{e:59}. If
$\langle\calL_2^{-1}(1),1\rangle\neq0$, then the stability index is given by
\[
k_\rmr+k_\rmi^-+k_\rmc=\rmn(\calL_2)-\rmn(\langle\calL_2^{-1}(1),1\rangle)-
\rmn(\langle U-\overline{U},U-\overline{U}\rangle+4(1-\hat{c})D_{\mathrm{gKdV}}),
\]
where
\[
D_{\mathrm{gKdV}}\coloneqq\frac{\left|%
\begin{array}{cc}%
\langle\calL_2^{-1}(U),U\rangle&\langle\calL_2^{-1}(U),1\rangle\\
\langle\calL_2^{-1}(U),1\rangle&\langle\calL_2^{-1}(1),1\rangle
\end{array}%
\right|}%
{\langle\calL_2^{-1}(1),1\rangle},
\]
and the parameter $\hat{c}$ is related to the original wave-speed $c$ via
$c^2=1-\hat{c}$.
\end{lemma}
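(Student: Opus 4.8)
The proof will amount to assembling the computations carried out in the discussion preceding the statement; I would organize them as follows.

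First I would recall that, with the choice \eref{e:59a} --- namely $\calC=\id$, $\calB=-2c\partial_x$, and $\calA=-\partial_x(\Pi_0\calL_2\Pi_0)\partial_x$ acting on $H_0$ --- the linearized spectral problem for \eref{e:59} is exactly the quadratic pencil $\calA+\lambda\calB+\lambda^2\calC$. I would then verify the hypotheses of \autoref{thm:index}: the hypothesis of \autoref{l:noessential} holds because $(P_\calA^\perp\calA P_\calA^\perp)^{-1}$ is compact while $\calB,\calC$ are differential operators of order strictly less than $\calA$; items (a)--(b) hold for $\calS=\calS_\alpha=\Pi_0(\partial_x^2+1)^\alpha\Pi_0$ with any $1/4<\alpha<1$; $\calC=\id$ is invertible; item (i), $\calB|_{\ker(\calA)}=0$, follows from skew-symmetry of $\partial_x$ and $2L$-periodicity of $U$; and item (ii), invertibility of $(\calC-\calB\calA^{-1}\calB)|_{\ker(\calA)}$, I would take as a standing hypothesis (it is exactly the requirement that the scalar appearing in the final $\rmn(\cdot)$ below be nonzero). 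All of these points have been checked above, using the standing assumption \eref{e:59b} to pin down $\ker(\calA)=\Span\{U-\overline{U}\}$.

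With the hypotheses in hand, \autoref{thm:index} gives
\[
k_\rmr+k_\rmc+k_\rmi^-=\rmn(\calA)+\rmn(\calC)-\rmn\bigl((\calC-\calB\calA^{-1}\calB)|_{\ker(\calA)}\bigr),
\]
and since $\calC=\id$ we have $\rmn(\calC)=0$. For $\rmn(\calA)$ I would use $\langle u,\calA u\rangle=\langle\partial_x u,\Pi_0\calL_2\Pi_0(\partial_x u)\rangle$ together with boundedness of $\partial_x^{-1}$ on $H_0$ to conclude $\rmn(\calA)=\rmn(\Pi_0\calL_2\Pi_0)$, then invoke \citep[equation~(2.25)]{deconinck:ots10} (valid precisely because $\langle\calL_2^{-1}(1),1\rangle\neq0$) to obtain \eref{e:510}. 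For the last term I would substitute $\calB\calA^{-1}\calB=4c^2\partial_x\calA^{-1}\partial_x$ and collapse $\partial_x\calA^{-1}\partial_x=-(\Pi_0\calL_2\Pi_0)^{-1}$ using invertibility of $\partial_x$ on $H_0$; writing $c^2=1-\hat{c}$ and restricting to $\ker(\calA)=\Span\{U-\overline{U}\}$ then yields the scalar $\langle U-\overline{U},U-\overline{U}\rangle+4(1-\hat{c})\langle(\Pi_0\calL_2\Pi_0)^{-1}(U-\overline{U}),U-\overline{U}\rangle$. Finally I would quote \citep[Section~3]{deconinck:ots10} to identify $\langle(\Pi_0\calL_2\Pi_0)^{-1}(U-\overline{U}),U-\overline{U}\rangle=D_{\mathrm{gKdV}}$, and the three displays combine to the claimed formula.

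Since essentially every step has already been carried out above, there is no deep obstacle. The only genuine points of care are bookkeeping ones: making sure the restriction to $\ker(\calA)$ is taken consistently, so that the one-dimensionality of $\ker(\calA)$ turns the matrix $\vD$ into a scalar, and being explicit that validity of the formula requires this scalar to be nonzero so that hypothesis (ii) of \autoref{thm:index} applies. A secondary technical point worth rechecking is that a single $\alpha\in(1/4,1)$ simultaneously makes $\calS_\alpha^{-1}$ compact, $\calS_\alpha^{-1}\calB\calS_\alpha^{-1}$ compact, and $\calS_\alpha^{-1}\calA\calS_\alpha^{-1}$ of compact resolvent; this was asserted above and I would simply reuse it.
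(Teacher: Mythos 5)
Your proposal is correct and follows essentially the same route as the paper, which likewise derives \autoref{lem:51} by verifying the hypotheses of \autoref{thm:index} for the operators in \eref{e:59a}, computing $\rmn(\calA)$ via \eref{e:510}, and identifying the restricted scalar $(\id-4(1-\hat{c})\partial_x\calA^{-1}\partial_x)|_{\ker(\calA)}$ with $\langle U-\overline{U},U-\overline{U}\rangle+4(1-\hat{c})D_{\mathrm{gKdV}}$ using \citep[Section~3]{deconinck:ots10}. The bookkeeping points you flag (one-dimensionality of $\ker(\calA)$ turning $\vD$ into a scalar, and nonvanishing of that scalar as the standing hypothesis for item (ii)) are exactly the ones the paper relies on.
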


\begin{remark}
It was shown in \citep[Theorem~2.6]{deconinck:ots10} that the stability index
for the gKdV is given by
\[
k_\rmr+k_\rmi^-+k_\rmc=\rmn(\calL_2)-\rmn(\langle\calL_2^{-1}(1),1\rangle)-
\rmn(D_{\mathrm{gKdV}});
\]
hence, when studying the spectrum for periodic waves to gB and gKdV there is
an intimate connection in the indices for the two problems. If
$D_{\mathrm{gKdV}}<0$, then for
\[
1>\hat{c}>1+
\frac{\langle U-\overline{U},U-\overline{U}\rangle}{4D_{\mathrm{gKdV}}}
\]
the stability index for the gB is exactly the same as for the gKdV.
Otherwise, there is precisely one more eigenvalue which is counted by the
index. On the other hand, if $D_{\mathrm{gKdV}}>0$, then the index for the
quadratic pencil is exactly that for the gKdV equation.
\end{remark}

\begin{remark}
There is a geometric interpretation associated with the quantity
$D_{\mathrm{gKdV}}$.  The interested reader should consult
\citep{bronski:ait11} for more details.
\end{remark}

\begin{remark}\label{hakkeavrem}
In \citet{hakkaev:lsa12} the spectral stability problem was considered under
the additional assumption that $\rmn(\calL_2)=1$. Furthermore, while it is
not explicitly stated, they further assume that
$\langle\calL_2^{-1}(1),1\rangle>0$, so that (in this paper's notation) the
index becomes
\[
k_\rmr+k_\rmi^-+k_\rmc=1-
\rmn(\langle U-\overline{U},U-\overline{U}\rangle+4(1-\hat{c})D_{\mathrm{gKdV}}).
\]
The instability criterion in that paper follows from the fact that
$k_\rmi^-,k_\rmc$ must be even integers, so that $k_\rmi^-=k_\rmc=0$, with
\[
k_\rmr=\begin{cases}
1,\quad&\hat{c}>\hat{c}^*\\
0,\quad&\hat{c}<\hat{c}^*,
\end{cases}
\]
where
\[
\hat{c}^*=1+
\frac{\langle U-\overline{U},U-\overline{U}\rangle}{4D_{\mathrm{gKdV}}}.
\]
It is not clear if in that paper an explicit connection is shown between the
gB and the gKdV.
\end{remark}

Now consider the orbital stability problem. The local global well-posedness
problem has been studied in, e.g.,
\citep{farah:otp10,fang:eau96,arruda:nsp09}, and it will henceforth be
assumed that the problem can be solved (at least) locally. Depending on the
growth rate of the nonlinearity, this implies that the initial data for
\eref{e:59} satisfies, e.g., $v_1(0)\in H^1_{\mathrm{per}}[-L,+L]$, and
$v_2(0)\in H^{-1}_{\mathrm{per}}[-L,+L]$, where the norm for the latter space
is given by
\[
\|u\|_{H^{-1}}^2=\sum_{z\in\Z}\frac{|\hat{u}(z)|^2}{1+|z|^2}.
\]
The form of the Lagrangian in \eref{e:55} makes clear that in order to
control the nonlinear terms the proper space in which to work is
$H^1_{\mathrm{per}}[-L,+L]\times L^2_{\mathrm{per}}[-L,+L]$. It will be
further assumed that the hypothesis leading to \autoref{lem:51} hold, and
that the spectral problem has zero instability index, i.e.,
$k_\rmr=k_\rmc=k_\rmi^-=0$. As was seen in
\citep[Section~2.4]{deconinck:ots10}, this is sufficient in order to conclude
that the wave is orbitally stable with respect to the evolution defined by
\eref{e:59}.

\begin{proposition}\label{thm:orbstable}
Suppose that the IVP for \eref{e:59} is locally well-posed. Further suppose
that in addition to what is required for a unique local solution to exist,
the mean-free perturbative initial data for the system \eref{e:59} satisfies
$\vv(0)\in H^1_{\mathrm{per}}[-L,+L]\times L^2_{\mathrm{per}}[-L,+L]$. If the
spectral problem satisfies $k_\rmr=k_\rmc=k_\rmi^-=0$, then the underlying
wave is orbitally stable. In other words, for each $\epsilon>0$ there is a
$\delta>0$ such that for \eref{e:59},
\[
\|\vu(0)-\vU\|_{H^1_{\mathrm{per}}\times L^2_{\mathrm{per}}}<\delta\quad\Rightarrow\quad
\sup_{t>0}\inf_{\omega\in\R}\|\vu(t)-\widehat{\calT}(\omega)\vU\|_{H^1_{\mathrm{per}}\times L^2_{\mathrm{per}}}<\epsilon.
\]
\end{proposition}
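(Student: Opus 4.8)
The plan is to deduce nonlinear orbital stability from the spectral hypothesis by the standard constrained-Lyapunov-functional argument, in exactly the form carried out for the gKdV in \citep[Section~2.4]{deconinck:ots10}; the one genuinely new ingredient is the translation of the index count of \autoref{lem:51} into a coercivity statement for the second variation of the Lagrangian.

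First I would record the coercivity. On the mean-zero phase space $H_0\times H_0$ the skew operator $\calJ$ has bounded inverse, so the linearization \eref{e:59} is a Hamiltonian eigenvalue problem $\calJ\Pi_0\Lambda''(\vU)\Pi_0\vv=\lambda\vv$ of the type treated in \autoref{s:3}, with generalized kernel spanned by the translational mode $\partial_\xi\vU$ and its Jordan partner (and finite-dimensional, two-dimensional in the nondegenerate case). By the index formula \eref{e:h9} (equivalently, by the Hamiltonian--Krein index identity of \citep{kapitula:sif12}), the hypothesis $k_\rmr=k_\rmc=k_\rmi^-=0$ is equivalent to the assertion that $\Lambda''(\vU)$, restricted by the orthogonal projection $\Pi$ onto $[\gker(\calJ\Pi_0\Lambda''(\vU)\Pi_0)]^\perp$, satisfies $\rmn(\Pi\Lambda''(\vU)\Pi)=0$. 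The nondegeneracy assumptions inherited from \autoref{lem:51} --- namely $\langle\calL_2^{-1}(1),1\rangle\neq0$ together with invertibility of the operator $(\id-4c^2\partial_x\calA^{-1}\partial_x)|_{\ker(\calA)}$, i.e.\ the matrix $\vD$ of \autoref{thm:index} --- are exactly what rules out an additional kernel on that subspace, so that $\Pi\Lambda''(\vU)\Pi$ is in fact positive definite and bounded below by a positive constant there.

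Next I would run the orbital-stability machinery of \citep{grillakis:sto90} in the form used in \citep{deconinck:ots10}. Use $\Lambda(\vu)=\widehat{\calH}(\vu)+c\calQ(\vu)$ --- conserved along \eref{e:59} --- as a Lyapunov functional; $\vU$ is a critical point and $\Lambda''(\vU)$ is its Hessian. Given $\vu(t)$ near $\vU$, use the implicit function theorem (and discreteness of the isotropy subgroup) to choose a modulation parameter $\omega=\omega(t)$ so that $\vw(t)\coloneqq\vu(t)-\widehat{\calT}(\omega(t))\vU$ is orthogonal to the translational part of $\gker(\calJ\Pi_0\Lambda''(\vU)\Pi_0)$, and use conservation of $\calQ$ to pin down the remaining secular direction of the generalized kernel. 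On the resulting subspace the coercivity above gives
\[
\|\vw(t)\|_{H^1_{\mathrm{per}}\times L^2_{\mathrm{per}}}^2
\lesssim\langle\Lambda''(\vU)\vw(t),\vw(t)\rangle
\le 2\bigl(\Lambda(\vu(0))-\Lambda(\vU)\bigr)+o\bigl(\|\vw(t)\|^2\bigr),
\]
using conservation of $\Lambda$ along the flow, where the Taylor remainder of $\Lambda$ is controlled in $H^1_{\mathrm{per}}[-L,+L]$ from the facts that this space is a Banach algebra in one space dimension and that $f$ is smooth --- the choice of the energy space $H^1_{\mathrm{per}}\times L^2_{\mathrm{per}}$ is made precisely so that this estimate closes. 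Combined with the assumed local well-posedness (which lets one continue the solution), a standard continuity/bootstrap argument then upgrades this to the uniform-in-time $\epsilon$--$\delta$ bound in the statement.

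The step I expect to be the main obstacle is the coercivity upgrade: one must verify that the spectral count $k_\rmr=k_\rmc=k_\rmi^-=0$ genuinely yields \emph{strict} positivity of the constrained quadratic form --- this is where the nondegeneracy hypotheses of \autoref{lem:51} (no extra kernel, $\vD$ invertible) are essential --- and that both the form and the nonlinear remainder extend continuously to the low-regularity space $H^1_{\mathrm{per}}\times L^2_{\mathrm{per}}$. Once those two points are secured, the rest of the proof is a verbatim transfer of \citep[Section~2.4]{deconinck:ots10}, since the Lagrangian structure, the symmetry group (spatial translation), and the form of the generalized kernel are identical to the gKdV case.
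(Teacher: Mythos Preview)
Your proposal is correct and follows exactly the approach the paper takes: the paper's argument for this proposition consists of observing that the $H^1_{\mathrm{per}}\times L^2_{\mathrm{per}}$ norm controls the nonlinear terms in the Lagrangian and then citing \citep[Section~2.4]{deconinck:ots10} for the fact that $k_\rmr=k_\rmc=k_\rmi^-=0$ (under the nondegeneracy hypotheses of \autoref{lem:51}) yields coercivity of the constrained Hessian and hence orbital stability. You have simply unpacked that citation in more detail, correctly identifying the role of the nondegeneracy conditions and the modulation/Lyapunov argument.
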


Note that for the original system \eref{e:a51} the requirement on the initial
data is
\[
u(0)=U+v_1(0),\quad\partial_tu(0)=-c\partial_xU+\partial_xv_2(0),
\]
where each component $v_j(0)$ has zero mean. A natural question is then: what
happens if the initial perturbation is not mean-free? In this case, we now
argue for orbital stability with respect to a nearby periodic traveling wave
of \eref{e:59}.  For related arguments in the contexts of other nonlinear
dispersive equations, see the work of \citet{HG07} on the nonlinear
Schrodinger equation as well as the works of \citet{Jkdv,Jbbm} on generalized
KdV and BBM models, respectively.

To begin, we make a few comments regarding the conserved quantities of
\eref{e:54}.   As discussed above, even though we set $b=0$ in the analysis,
the periodic traveling wave solutions of \eref{e:59}, which are solutions to
the ODE \eref{e:57a}, form a five-parameter family of solutions of the form
\[
u_{\xi}(x,t)=u(x-ct+\xi;a,E,c,b)
\]
where $\xi\in\mathbb{R}$ and $(a,E,c,b)$ belong to some open set
$\Omega\in\mathbb{R}^4$.  Furthermore, the evolution equation \eref{e:54}
admits the following two conserved quantities: the momentum (charge)
\[
\mathcal{P}(\vw)\coloneqq\int_{-L}^Lw_1w_2\,\rmd x,\quad
\vw\coloneqq\left(w_1,w_2\right)\in H^1_{\rm per}[-L,+L]\times L^2_{\mathrm{per}}[-L,+L],
\]
arising from the translation invariance of \eref{e:59}, and the casimirs
\[
\mathcal{M}_1(\vw)\coloneqq\int_{-L}^Lw_1\,\rmd x,\quad
\mathcal{M}_2(\vw\coloneqq\int_{-L}^Lw_2\,\rmd x,\quad
\vw\coloneqq\left(w_1,w_2\right)\in H^1_{\rm per}[-L,+L]\times L^2_{\mathrm{per}}[-L,+L],
\]
arising from the fact that $\ker(\mathcal{J})$ is non-trivial.  Notice that
$\mathcal{P}$ and $\mathcal{M}$ are smooth functionals on $H^1_{\rm
per}[-L,+L]\times L^2_{\mathrm{per}}[-L,+L]$ and that, when restricted to the
manifold of traveling wave solutions of \eref{e:59} the functionals
$\mathcal{M}_1,\,\mathcal{M}_2$, and $\mathcal{P}$ reduce to
\[
\quad M_1(a,E,c,b)\coloneqq\int_0^Tu(x;a,E,c,b)\,\rmd x,\quad M_2(a,E,c,b):=cM_1(a,E,c,b)-bT,
\]
and
\[
\widetilde P(a,E,c,b)\coloneqq-P(a,E,c,b)+bM_1(a,E,c,b),
\]
where here $T=T(a,E,c,b)\,(=2L)$ denotes the period of the wave and
$P(a,E,c,b)\coloneqq c\int_0^Tu(x;a,E,c,b)^2\,\rmd x$.

Now, consider the case where the means of $v_j(0)$ are small, but non-zero.
Using the geometric formalism of Bronski et.al (see
\citep{bronski:ait11,brj}) we have the following key lemma.

\begin{lemma}\label{l:geom}
With the notation as above, we the equality
\[
\left(\mathcal{I}-4(1-\hat{c})\partial_x\mathcal{A}^{-1}\partial_x\right)|_{{\rm ker}(\mathcal{A})}
=T{\rm det}\left(\begin{array}{cc}
                 T_a & M_{1,a}\\
                 T_E & M_{1,E}
                 \end{array}\right)
{\rm det}\left(\begin{array}{cccc}
                 T_E & M_{1,E} & \widetilde P_E & M_{2,E}\\
                 T_a & M_{1,a} & \widetilde P_a & M_{2,a}\\
                 T_c & M_{1,c} & \widetilde P_c & M_{2,c}\\
                 T_b & M_{1,b} & \widetilde P_b & M_{2,b}
                 \end{array}
                 \right).
\]
\end{lemma}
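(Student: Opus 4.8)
The plan is to start from the scalar identity already established in the paragraph preceding \autoref{lem:51}, namely
\[
\bigl(\mathcal{I}-4(1-\hat{c})\partial_x\calA^{-1}\partial_x\bigr)\big|_{\ker(\calA)}
=\langle U-\overline{U},U-\overline{U}\rangle+4(1-\hat{c})\,D_{\mathrm{gKdV}},
\]
and to rewrite the right-hand side entirely in terms of derivatives of the period $T$ and of the functionals $M_1,\widetilde P,M_2$ with respect to the profile parameters $(a,E,c,b)$. The whole argument is a translation of the geometric formalism of \citep{bronski:ait11,brj}, originally developed for the three-parameter family of gKdV periodic waves, to the four-parameter gB family; no new spectral input is needed.

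First I would record the resolvent identities obtained by differentiating the profile ODE \eref{e:53}. Since $U$ enters \eref{e:57a} only through the effective integration constant $\tilde a\coloneqq a+cb$, the rescaled speed $\hat c=1-c^2$, and the energy $E$, one has $\calL_2 U'=0$, $\calL_2(\partial_{\tilde a}U)=-1$, and $\calL_2(\partial_{\hat c}U)=-U$ (up to sign conventions), while $\partial_E U$ is the non-periodic solution of $\calL_2 v=0$. The chain rule through $\tilde a=a+cb$, $\hat c=1-c^2$ then gives, acting on any functional of the profile,
\[
\partial_a=\partial_{\tilde a},\qquad \partial_b=c\,\partial_{\tilde a},\qquad \partial_c=b\,\partial_{\tilde a}-2c\,\partial_{\hat c},\qquad \partial_E=\partial_E,
\]
with the usual extra contributions whenever the functional (such as $P$ or $M_2$) carries explicit $c$- or $b$-dependence in addition to its dependence through $U$.

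Next comes the geometric dictionary. Using that $1,U\perp\ker(\calL_2)=\Span\{U'\}$ over a period, so that $\langle\calL_2^{-1}(1),1\rangle$, $\langle\calL_2^{-1}(U),1\rangle$, $\langle\calL_2^{-1}(U),U\rangle$ are well defined, I would identify each of these inner products — as well as $\langle U-\overline{U},U-\overline{U}\rangle=\langle U,U\rangle-\tfrac1{2L}\langle U,1\rangle^2$ — with $3\times3$ minors of the Jacobian of the period, mass, and momentum functionals with respect to $(\tilde a,E,\hat c)$, exactly as in \citep{bronski:ait11}. Substituting these into the explicit formula for $D_{\mathrm{gKdV}}$ turns $4(1-\hat c)D_{\mathrm{gKdV}}$ into a ratio whose numerator is the $2\times2$ determinant of a matrix of $3\times3$ minors and whose denominator is a single $3\times3$ minor. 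Applying the chain-rule relations above then converts derivatives in $(\tilde a,E,\hat c)$ into derivatives in $(a,E,c,b)$; this is precisely what introduces the columns corresponding to $\widetilde P=-P+bM_1$ and $M_2=cM_1-bT$ and the additional row of $b$-derivatives, enlarging the $3\times3$ data to the $4\times4$ matrix in the statement, while the factors of $c$ generated in the process supply the coefficient $4(1-\hat c)=4c^2$. A Jacobi-type determinant identity (Desnanot--Jacobi/Sylvester condensation) finally recombines the numerator of $4c^2D_{\mathrm{gKdV}}$ together with the $\langle U-\overline{U},U-\overline{U}\rangle$ term into a single product of the $4\times4$ determinant and a $2\times2$ minor, the denominator $\langle\calL_2^{-1}(1),1\rangle$ cancelling against one of the minors and the leftover normalization $\langle 1,1\rangle=2L=T$ producing the prefactor $T$.

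The step I expect to be the main obstacle is the combination of the dictionary and the final matching: pinning down precisely which minors the inner products correspond to under the conventions of \citep{bronski:ait11} (which are stated for a differently normalized gKdV and for one fewer parameter), tracking all signs through the chain rule with its explicit-parameter corrections, and verifying that the Jacobi identity delivers the $4\times4$ block with columns ordered $(T,M_1,\widetilde P,M_2)$ against rows $(E,a,c,b)$ and the $2\times2$ block with rows $(a,E)$ against columns $(T,M_1)$. There is no conceptual difficulty, only a delicate piece of multilinear bookkeeping in which a dropped sign or factor is easy to make and hard to catch.
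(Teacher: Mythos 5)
Your plan would get there, but it is organized quite differently from the paper's proof, and the difference matters for where the work lands. You propose to take the scalar expression $\langle U-\overline{U},U-\overline{U}\rangle+4(1-\hat{c})D_{\mathrm{gKdV}}$ already derived before \autoref{lem:51}, translate each of the inner products $\langle\calL_2^{-1}(1),1\rangle$, $\langle\calL_2^{-1}(U),1\rangle$, $\langle\calL_2^{-1}(U),U\rangle$ into minors of the Jacobian of $(T,M_1,\widetilde P,M_2)$ via the dictionary of \citep{bronski:ait11}, and then recombine by a Desnanot--Jacobi condensation. The paper instead exhibits a single explicit function, the bordered determinant $\phi_2$ built from the rows $(u_a,T_a,M_{1,a})$, $(u_E,T_E,M_{1,E})$, $(u_c,T_c,M_{1,c})$, checks that $\Pi_0\calL_2\Pi_0\phi_2=2c\,(T_aM_{1,E}-T_EM_{1,a})(U-\overline{U})$, so that $\phi_2$ is (up to that constant and the kernel) the preimage $(\Pi_0\calL_2\Pi_0)^{-1}(U-\overline{U})$, and then evaluates $\langle\Pi_0U,(1+4c^2(\Pi_0\calL_2\Pi_0)^{-1})\Pi_0U\rangle$ in one pass: expanding $\langle\Pi_0U,\phi_2\rangle$ along the first column produces the $\widetilde P$- and $M_2$-derivatives directly, and the $b$-row of the $4\times4$ determinant is then recovered from the chain-rule relations $T_b=cT_a$, $M_{1,b}=cM_{1,a}$, and so on. The paper's device buys you the $4\times4$ structure without ever invoking a separate determinant identity; yours buys direct reuse of the gKdV minor formulas. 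The one point you should not wave at: $\langle U-\overline{U},U-\overline{U}\rangle$ is a Gram quantity, not a minor, and it is absorbed into the $4\times4$ determinant only through the \emph{undifferentiated} $T$ and $M_1$ that appear inside the entries $M_{2,c}$, $M_{2,b}$, and $\widetilde P_b$ (evaluated at $b=0$). Your appeal to ``a Jacobi-type identity'' is precisely the step where this absorption must be exhibited, and since that identity is the entire content of the lemma, it needs to be written out rather than asserted; as it stands your argument establishes the shape of the answer but not the equality itself.
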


\begin{proof}
To compute the left hand side, define the function
\[
\phi_2(x):=\det\left(\begin{array}{ccc}
                     u_a & T_a & M_{1,a}\\
                     u_E & T_E & M_{1,E}\\
                     u_c & T_c & M_{1,c}
                     \end{array}\right)
\]
and satisfies
\[
\Pi_0\mathcal{L}_2\Pi_0\phi_2 = \Pi_0\mathcal{L}_2\phi_2 =
2c{\rm det}\left(\begin{array}{cc}
                 T_a & M_{1,a}\\
                 T_E & M_{1,E}
                 \end{array}\right)\left(U-\bar U\right).
\]
The stated equality now follows by directly calculating
\[
\left(\mathcal{I}-4(1-\hat{c})\partial_x\mathcal{A}^{-1}\partial_x\right)|_{{\rm ker}(\mathcal{A})}
=\left<\Pi_0 U,\left(1+4c^2\left(\Pi_0\mathcal{L}_2\Pi_0\right)^{-1}\right)\Pi_0 U\right>
\]
and comparing to the right hand side of the above equality.
\end{proof}

%
%
%

From \autoref{l:geom} and the assumption that
\[
\left(\mathcal{I}-4(1-\hat{c})\partial_x\mathcal{A}^{-1}\partial_x\right)|_{{\rm ker}(\mathcal{A})}
\]
is nonsingular at the underlying wave $U$, corresponding say to $(a,E,c,b)=(a_0,E_0,c_0,b_0)$,
implies that the map
\[
\mathbb{R}^4\ni(a,E,c,b)\mapsto\left(T(a,E,c,b),M_1(a,E,c,b),\widetilde P(a,E,c,b),M_2(a,E,c,b)\right)\in\mathbb{R}^4
\]
is a local diffeomorphism from a neighborhood of $(a_0,E_0,c_0,b_0)$ onto a neighborhood of
the point
\[
(T,M_1,\widetilde P,M_2)(a_0,E_0,c_0,b_0).
\]
It follows that we can find a curve $[0,1]\ni s\mapsto
(a(s),E(s),c(s),b(s))\in\mathbb{R}^4$ with
$(a(0),E(0),c(0),b(0))=(0,0,0,0)$
such that for
each $s\in[0,1]$ the function
\[
\tilde{u}(x;s)=u\left(x;a_0+a(s),E_0+E(s),c_0+c(s),b_0+b(s)\right)
\]
is a $T=T(a_0,E_0,c_0,b_0)$-periodic traveling wave solution of \eref{e:a51}
and that, moreover the endpoint condition
\[
\begin{split}
M_j\left(a_0+a(1),E_0+E(1),c_0+c(1),b_0+b(1)\right)&=\mathcal{M}_j\left(\vu(0)+\vv(0)\right),\quad j=1,2\\
P\left(a_0+a(1),E_0+E(1),c_0+c(1),b_0+b(1)\right)&=\mathcal{P}\left(\vu(0)+\vv(0)\right)
\end{split}
\]
and growth constraint
\[
\sup_{s\in(0,1)}\left|\left(a(s),E(s),c(s),b(s)\right)\right|_{\mathbb{R}^4}\lesssim \left\|\vv(0)\right\|_{H^1_{\rm per}(-L,L)\times L^2_{\rm per}(-L,L)}
\]
are satisfied.
Assuming $\left\|\vv(0)\right\|_{H^1_{\rm per}\times L^2_{\mathrm{per}}}$ is
sufficiently small, it follows that the wave $\tilde{u}(\cdot,1)$ is
nonlinearly orbitally stable in the sense described in
\autoref{thm:orbstable}, which, by the triangle inequality, implies orbital
stability of $U$ to initial perturbations $\vv(0)$ with nonzero, but
sufficiently small, mean.  Since $\mathcal{M}_j$ and $\mathcal{P}$ are
continuous in the $H^1_{\rm per}\times L^2_{\mathrm{per}}$ topology, it
follows that we have orbital stability in the standard sense without the
restriction to mean-free initial data in \eref{e:59}. This observation yields
the following extension of \autoref{thm:orbstable}

\begin{theorem}\label{thm:orbstable2}
Suppose that the IVP for \eref{e:59} is locally well-posed.  Further, suppose
that in addition to what is required for a unique local solution to exist,
the perturbative initial data for the system \eref{e:59} satisfies $\vv(0)\in
H^1_{\rm per}[-L,+L]\times L^2_{\mathrm{per}}[-L,+L]$.  If the spectral
problem satisfies $k_r=k_c=k_i^-=0$, then the underlying wave is orbitally
stable, i.e., for each $\epsilon>0$ there exists a $\delta>0$ such that for
\eref{e:59} we have
\[
\|\vu(0)-\vU\|_{H^1_{\mathrm{per}}\times L^2_{\mathrm{per}}}<\delta\quad\Rightarrow\quad
\sup_{t>0}\inf_{\omega\in\R}\|\vu(t)-\widehat{\calT}(\omega)\vU\|_{H^1_{\mathrm{per}}\times L^2_{\mathrm{per}}}<\epsilon.
\]
\end{theorem}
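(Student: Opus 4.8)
The plan is to reduce to the mean-free orbital stability already established in \autoref{thm:orbstable} by \emph{modulating} the base wave $U$ to a nearby periodic traveling wave of \eref{e:a51} whose period and low-order conserved quantities exactly match those of the perturbed datum; this is the by-now standard device used for related dispersive equations (cf. \citep{HG07,Jkdv,Jbbm}). Write the perturbation as $\vv(0)=\vu(0)-\vU\in H^1_{\rm per}[-L,+L]\times L^2_{\rm per}[-L,+L]$. If $\mathcal{M}_1(\vv(0))=\mathcal{M}_2(\vv(0))=0$, then $\vv(0)$ is mean-free and \autoref{thm:orbstable} applies verbatim, so the content lies in the case where the means of the components of $\vv(0)$ are small but nonzero.

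First I would invoke \autoref{l:geom} together with the standing hypothesis that $\bigl(\mathcal{I}-4(1-\hat{c})\partial_x\mathcal{A}^{-1}\partial_x\bigr)|_{\ker(\mathcal{A})}$ is nonsingular at the parameters $(a_0,E_0,c_0,b_0)$ defining $U$: since the right-hand side of \autoref{l:geom} is (up to the factor $T\det\!\bigl(\begin{smallmatrix}T_a&M_{1,a}\\ T_E&M_{1,E}\end{smallmatrix}\bigr)$) the $4\times4$ determinant associated with the map
\[
\R^4\ni(a,E,c,b)\longmapsto\bigl(T,M_1,\widetilde P,M_2\bigr)(a,E,c,b)\in\R^4 ,
\]
nonvanishing of the left-hand side forces this determinant, hence the Jacobian of the displayed map, to be nonzero; the inverse function theorem then makes the map a local diffeomorphism near $(a_0,E_0,c_0,b_0)$. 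Inverting it for data with $\|\vv(0)\|_{H^1_{\rm per}\times L^2_{\rm per}}$ small produces parameters $(a_0+a(1),\dots,b_0+b(1))$ whose associated periodic traveling wave $\tilde u(\cdot;1)$ of \eref{e:a51} has \emph{the same} period $2L$ (since $T$ is one of the four matched coordinates) and satisfies
\[
\mathcal{M}_j\bigl(\tilde{\vu}(\cdot;1)\bigr)=\mathcal{M}_j\bigl(\vu(0)\bigr)\ \ (j=1,2),\qquad \mathcal{P}\bigl(\tilde{\vu}(\cdot;1)\bigr)=\mathcal{P}\bigl(\vu(0)\bigr) ,
\]
with the quantitative bound $|(a(1),E(1),c(1),b(1))|\lesssim\|\vv(0)\|_{H^1_{\rm per}\times L^2_{\rm per}}$. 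Since the index formula of \autoref{lem:51} expresses $k_\rmr+k_\rmc+k_\rmi^-$ as a sum of signature counts $\rmn(\cdot)$ of quantities that are nonzero/nondegenerate at $U$ (this uses $\langle\calL_2^{-1}(1),1\rangle\neq0$, the nonsingularity above, and the simplicity and isolation of the translational zero eigenvalue of $\calL_2$), each such count is locally constant in the parameters; hence $\tilde u(\cdot;1)$ again satisfies $k_\rmr=k_\rmc=k_\rmi^-=0$, and \autoref{thm:orbstable} is applicable to it.

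Now set $\vw(0)\coloneqq\vu(0)-\tilde{\vu}(\cdot;1)$. Matching of $\mathcal{M}_1,\mathcal{M}_2$ makes $\vw(0)$ mean-free, and $\|\vw(0)\|_{H^1_{\rm per}\times L^2_{\rm per}}\le\|\vv(0)\|+\|\tilde{\vu}(\cdot;1)-\vU\|\lesssim\|\vv(0)\|$, so \autoref{thm:orbstable}, applied to the wave $\tilde u(\cdot;1)$ with the mean-free perturbation $\vw(0)$, gives $\sup_{t>0}\inf_{\omega\in\R}\|\vu(t)-\widehat{\calT}(\omega)\tilde{\vu}(\cdot;1)\|_{H^1_{\rm per}\times L^2_{\rm per}}<\epsilon/2$ once $\|\vv(0)\|$ is sufficiently small. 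A triangle inequality together with $\|\tilde{\vu}(\cdot;1)-\vU\|_{H^1_{\rm per}\times L^2_{\rm per}}\lesssim\|\vv(0)\|<\epsilon/2$, and the continuity of $\mathcal{M}_1,\mathcal{M}_2,\mathcal{P}$ on $H^1_{\rm per}\times L^2_{\rm per}$ (which is what makes the modulated parameters depend continuously on $\vu(0)$, so that $\delta$ can be chosen uniformly), then yields orbital stability of $U$ with no mean-free restriction, completing the proof.

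The step I expect to be the main obstacle is the construction of the nearby wave: one must simultaneously hold the period fixed at $2L$ (so that $U$ and $\tilde u(\cdot;1)$ are comparable elements of the same periodic space), absorb \emph{both} mean degrees of freedom of the perturbation by matching $\mathcal{M}_1$ and $\mathcal{M}_2$, and match the momentum $\mathcal{P}$ --- four scalar constraints pinned down by the four parameters $(a,E,c,b)$ --- which is exactly what the nonvanishing of the $4\times4$ determinant in \autoref{l:geom} guarantees via the inverse function theorem. The remaining points (mean-freeness of the residual $\vw(0)$, propagation of the $\lesssim\|\vv(0)\|$ bounds through the triangle inequality, and persistence of $k_\rmr=k_\rmc=k_\rmi^-=0$ under the small modulation) are routine given the preceding sections.
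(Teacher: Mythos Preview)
Your proposal is correct and follows essentially the same argument as the paper: invoke \autoref{l:geom} and the nonsingularity hypothesis to make $(a,E,c,b)\mapsto(T,M_1,\widetilde P,M_2)$ a local diffeomorphism, select a nearby $2L$-periodic wave $\tilde u(\cdot;1)$ matching the masses and momentum of the perturbed datum, apply \autoref{thm:orbstable} to that wave with the now mean-free residual, and close with the triangle inequality. The only point you make more explicit than the paper is the persistence of $k_\rmr=k_\rmc=k_\rmi^-=0$ for the modulated wave via local constancy of the signature counts in \autoref{lem:51}; the paper leaves this implicit.
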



In the next sections, we utilize the above explicit connection of the
stability problems for gB and gKdV type equations to make several comments
regarding the stability of periodic waves in the gB equation.

\subsubsection{Case study: power law nonlinearity}\label{s:case1}

The stability indices for the gKdV equation were computed in
\citet{bronski:ait11} for the case that $f(u)=u^{p+1}$ for some $p\ge1$ and
$|\hat{c}|<1$. Note via \eref{e:defch} that this implies $|c_\pm|<1$. While
we will not do it here, the gB can be rescaled so that the influence of
$\hat{c}$ is removed from the steady-state problem. This independence is
reflected in the existence diagrams. As in the analysis leading to the
statement of \autoref{lem:51}, it will be assumed here that $b=0$.

First suppose that $p=1$, which corresponds to the classical gB equation. In
\citep[Section~5.1]{bronski:ait11} it is shown that all periodic waves in this model
satisfy
\[
\rmn(\calL_2)=1,\quad\rmn(\langle\calL_2^{-1}(1),1\rangle=0,\quad
\rmn(D_{\mathrm{gKdV}})=1.
\]
Thus, via \autoref{lem:51} it is true that for a given $(a,E)$ there is a
critical positive wave-speed $\hat{c}^*_{a,E}<1$ such that
$k_\rmi^-=k_\rmc=0$ with
\[
k_\rmr=\begin{cases}
1,\quad&\hat{c}>\hat{c}_{a,E}\\
0,\quad&\hat{c}<\hat{c}_{a,E}.
\end{cases}
\]
Returning to the original wavespeed $c$, it follows that for any periodic
traveling wave solution of the classical gB equation there exists a range of
wavespeeds $(1-\hat{c}_{a,E}^*)^{1/2}<|c|<1$ for which the wave is
nonlinearly (orbitally) stable, while it is unstable spectrally unstable to
perturbations with the same period if $|c|<(1-\hat{c}_{a,E}^*)^{1/2}$. This
is consistent with the result of \citep[Theorem~2]{hakkaev:lsa12}, where
$\hat{c}_{a,E}$ is explicitly given when $a=0$. In that paper the case of
nonzero $a$ was not considered and only spectral instability for
$|c|\leq(1-\hat{c}_{a,E}^*)^{1/2}$ was verified.  Here, our calculations
compliment this result by verifying that waves traveling with speed greater
than $(1-\hat{c}_{a,E}^*)^{1/2}$ are by \autoref{thm:orbstable} indeed
nonlinearly stable.

\begin{figure}
\centering
\includegraphics[scale=.45]{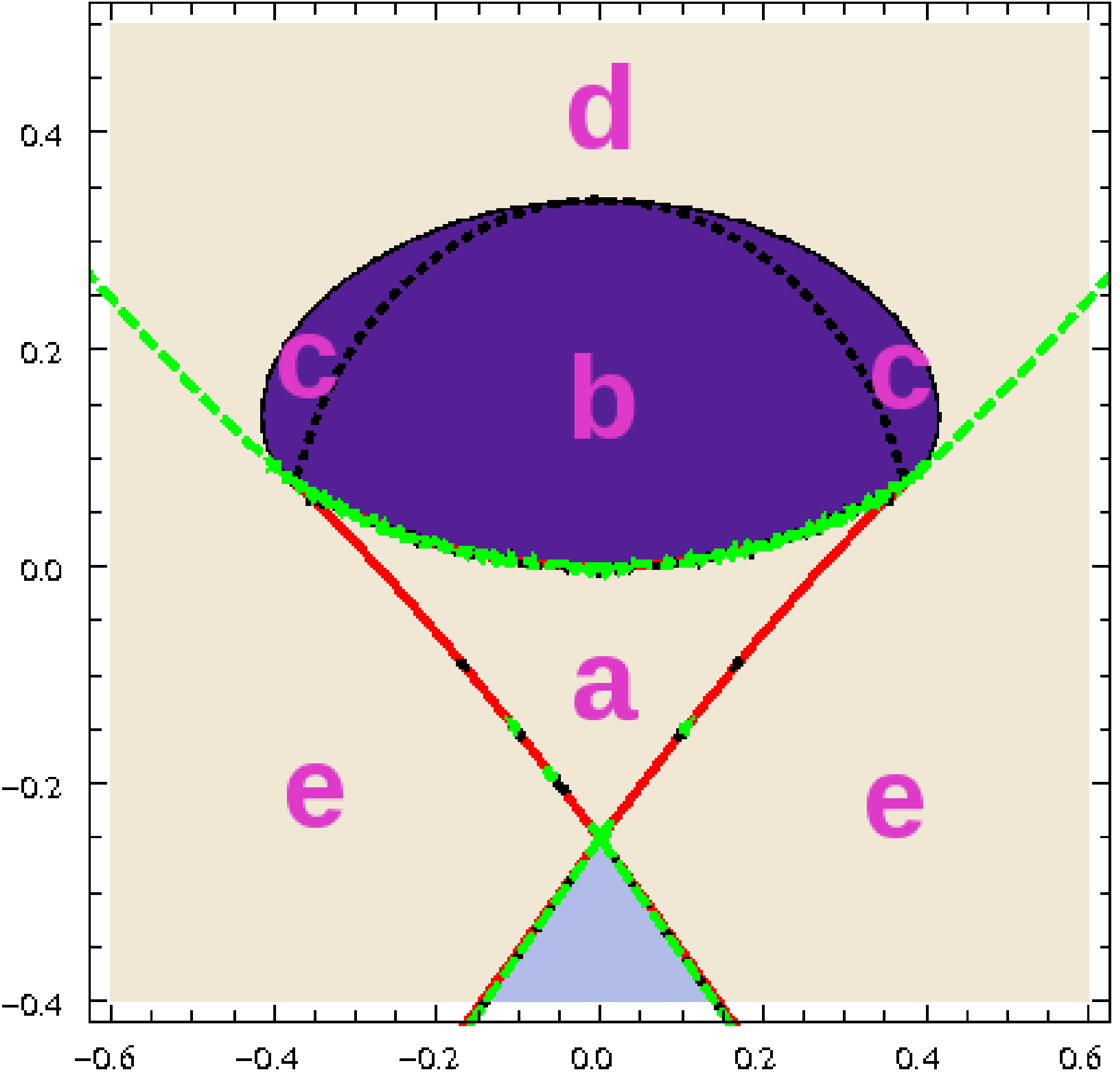}
\caption{(color online) The configuration space in the $aE$-plane when $p=2$, 
$b=0$, and $|\hat{c}|<1$ fixed
(see \citep[Figure~3]{bronski:ait11}). The swallowtail
figure divides the plane into regions containing $0,1,$ and $2$ periodic
solutions. In region (a) there are two solutions, while all of the other marked regions
have one solution. In the unmarked region there are no periodic solutions.
The quantities used in the stability calculation are given in an accompanying table.}
\label{f:J_MKdV}
\end{figure}

For examples which are not covered in \citet{hakkaev:lsa12}, e.g., when it is
possible for $\rmn(\calL_2)\ge2$, first consider the problem when $p=2$. The
table below, which corresponds to \autoref{f:J_MKdV}, can be derived from
\citep[Section~5.2]{bronski:ait11}:

\begin{center}
\begin{tabular}{|c|c|c|c|}
\hline\rule[-3mm]{0mm}{8mm}
Region &  $\rmn(\calL_2)$ &  $\rmn(\langle\calL_2^{-1}(1),1\rangle)$  & $\rmn(D_{\mathrm{gKdV}})$ \\\hline\hline
(a) & 1 & 0 & 1\\\hline
(b) & 2 & 0 & 1 \\\hline
(c) & 2 & 1 & 0\\\hline
(d) & 2 & 1 & 1\\\hline
(e) & 1 & 0 & 1\\\hline
\end{tabular}
\end{center}

\noindent From the theoretical result in \autoref{lem:51} it will be the case
that in that in regions (a), (d), and (e) there will exist a
$0<\hat{c}_{a,E}<1$ such that $k_\rmr=1$ for $\hat{c}>\hat{c}_{a,E}$ and
$k_\rmr=0$ otherwise; furthermore, it is always true that $k_\rmi^-=k_\rmc=0$
in these regions. In region (c) it will be the case that $k_\rmr=1$ for all
$c$ with the other two indices being zero. Finally, in region (b) there will
exist a $0<\hat{c}_{a,E}<1$ such that $k_\rmr=1$ for
$-1<\hat{c}<\hat{c}_{a,E}$ with the other two indices being zero, while for
$\hat{c}>\hat{c}_{a,E}$ all that can be said is that
$k_\rmr+k_\rmi^-+k_\rmc=2$. Notice, however, that by parity we see for speeds
$\hat{c}>\hat{c}_{a,E}^*$ in region (b) we have $k_\rmr=0$ and
$k_{\rmi}^-+k_\rmc=2$, which allows the possibility that some waves may still
be spectrally stable in this region with $k_{\rmi}^-=2$ or that some waves
may be spectrally unstable to perturbations with the same period with
$k_\rmc=2$: such a situation is precluded in the well-studied solitary wave
theory.

\begin{remark}
In \citep[Theorem~1]{hakkaev:lsa12} it is shown that for one of the two
solutions in region (a) of \autoref{f:J_MKdV} with $a=0$ the index satisfies
$k_\rmi^-=k_\rmc=0$ with $k_\rmr=1$ for $\hat{c}>\hat{c}_{a,E}$, and
$k_\rmr=0$ for $\hat{c}<\hat{c}_{a,E}$. Furthermore, for this solution the
constant $\hat{c}_{a,E}$ is explicitly given when $a=0$. Although they do not
consider the case in their paper, the same result holds in region (e). The
parameter region which is outside their theory comprises the union of (b),
(c), and (d).
\end{remark}

\begin{figure}
\centering
\includegraphics[scale=.45]{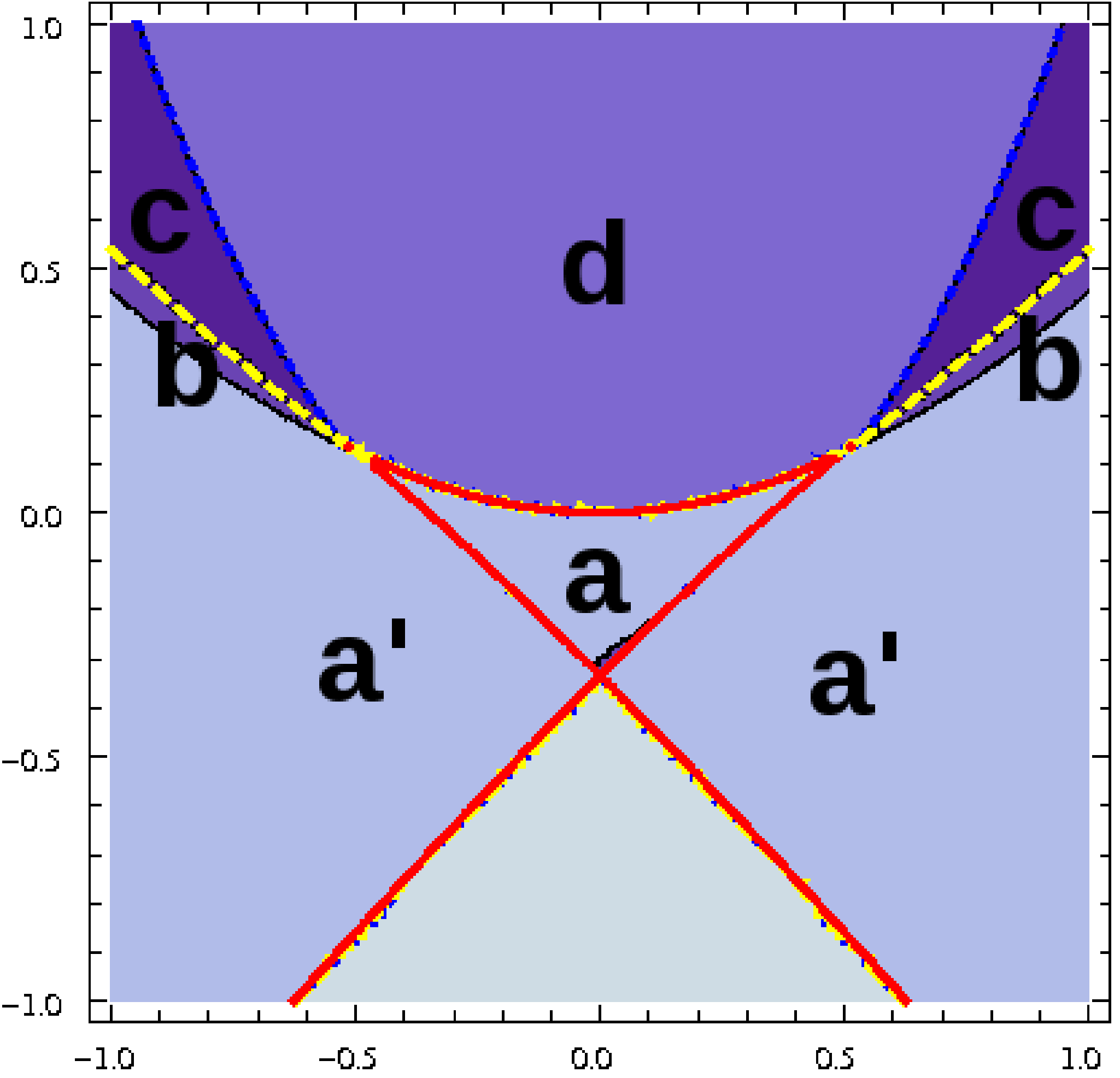}
\caption{(color online) The configuration space in the $aE$-plane when $p=4$ and $|\hat{c}|<1$ (see \citep[Figure~4]{bronski:ait11}). The swallowtail
figure divides the plane into regions containing $0,1,$ and $2$ periodic
solutions. In region (a) there are two solutions, while all of the other marked regions
have one solution. In the unmarked region there are no periodic solutions.
The quantities used in the stability calculation are given in an accompanying table.}
\label{f:KdV4_stability_diagram}
\end{figure}

The below table for $p=4$ which corresponds to
\autoref{f:KdV4_stability_diagram} can be derived from
\citep[Section~5.3]{bronski:ait11}:

\begin{center}
\begin{tabular}{|c|c|c|c|}
\hline\rule[-3mm]{0mm}{8mm}
Region &  $\rmn(\calL_2)$ &  $\rmn(\langle\calL_2^{-1}(1),1\rangle)$  & $\rmn(D_{\mathrm{gKdV}})$ \\\hline\hline
(a)  & 1 & 0 & 1\\\hline
(a') & 1 & 0 & 1 \\\hline
(b)  & 1 & 0 & 0\\\hline
(c)  & 2 & 1 & 0\\\hline
(d)  & 2 & 0 & 1\\\hline
\end{tabular}
\end{center}

\noindent From the theoretical result in \autoref{lem:51} it will be the case
that in that in regions (a) and (a') there will exist a $0<c^*_{a,E}<1$ such
that $k_\rmr=1$ for $\hat{c}>\hat{c}_{a,E}$ and $k_\rmr=0$ otherwise;
furthermore, it is always true that $k_\rmi^-=k_\rmc=0$. In regions (b) and
(c) it will be the case that $k_\rmr=1$ for all $\hat{c}$ with the other two
indices being zero. Finally, in region (d) there will exist a
$0<\hat{c}_{a,E}<1$ such that $k_\rmr=1$ for $\hat{c}<\hat{c}_{a,E}$ with the
other two indices being zero, while for $\hat{c}>\hat{c}_{a,E}$ all that can
be said is that $k_\rmr+k_\rmi^-+k_\rmc=2$.

\subsubsection{Case study, continued: solitary wave and equilibrium solution limits}\label{s:case2}

In this final section, we make some comments regarding the stability of
periodic traveling wave solutions of \eref{e:a51} which are either near the
solitary wave or near an equilibrium (constant) solution. Throughout this
section, we continue to consider \eref{e:a51} with power law nonlinearity
$f(u)=u^{p+1}$ for some $p\geq 1$.  In this case, we have from \eref{e:57a}
that the profile $U$ satisfies the ODE
\[
\partial_x^2u=(1-c^2)u-u^{p+1},
\]
where, for simplicity, we are restricting our discussion to those waves with
$a=b=0$\footnote{As we will see below, this is a natural restriction when
considering the limiting case to a solitary wave asymptotic to zero as
$x\to\pm\infty$.}.  This equation is clearly Hamiltonian, and has critical
points $(u,\partial_x u)=(0,0)$, corresponding to a saddle point, and
$(u,\partial_x u)=((1-c^2)^{1/p},0)$\footnote{Notice when $p$ is an even
integer, the point $(u,\partial_x u)=(-(1-c^2)^{1/p},0)$ is also a critical
point.  In this discussion, we ignore this additional critical point, noting
that any conclusions for periodic waves emerging from the $((1-c^2)^{1/p},0)$
critical point hold also for those emerging from the $(-(1-c^2)^{1/p},0)$
critical point.  For general $p\geq 1$, the governing ODE does not admit such
negative solutions.}, corresponding to a nonlinear center. Further, for a
fixed wavespeed $c\in(-1,1)$, in the two-dimensional $(u,\partial_x u)$
phase plane the nonlinear center $((1-c^2)^{1/p},0)$ is surrounded by a one
parameter family of periodic orbits, which are in turn bounded by an orbit
which is homoclinic to the saddle point $(0,0)$. These periodic orbits can be
parameterized by the ODE energy $E$ determined from the defining relation
\[
\frac{1}{2}(\partial_x u)^2 = E+\frac{1-c^2}{2}u^2-\frac{1}{p+2}u^{p+2}.
\]
The period $T(E,c)$ of these waves inside the homoclinic orbit satisfies
\[
\lim_{E\to 0^-}T(E,c)=+\infty,\quad\lim_{E\to E^*(c)^+} T(E,c)=\frac{2\pi}{\sqrt{(1-c^2)p}}
\]
where $E^*(c)$ is the ODE energy level associated with the equilibrium point
$((1-c^2)^{1/p},0)$. Below, we consider the stability of the periodic
traveling wave solutions of \eref{e:a51} in both the distinguished limits
$E\to 0^-$, corresponding to the solitary wave limit, and $E\to E^*(c)^+$,
associated with small amplitude periodic wave trains.

We begin by considering the stability of the periodic traveling waves of
\eref{e:a51} in the solitary wave limit.  When considering solitary wave
solutions which decay to zero as $x\to\pm\infty$ we obtain a two-parameter
family of solutions parameterized by wavespeed $c$ and spatial translation,
which corresponds to a one-parameter family of homoclinic orbits in the
two-dimensional phase space of \eref{e:53} (notice the boundary conditions in
this case require $a=E=b=0$).  Keeping throughout $a=b=0$ and fixing the
wavespeed $c\in(-1,1)$, by the above considerations there exists, up to
translations, a one parameter family of ``large period" periodic traveling
wave solutions of \eref{e:a51} parameterized by the ODE energy $E$.
Furthermore, for fixed $c$ these periodic waves approach locally uniformly on
$\mathbb{R}$ an appropriate translate of the limiting solitary wave as $E\to
E^-$.
%
The stability of the limiting solitary waves was investigated by
\citet{bona:geo88}, where they applied the abstract theory of
\citet{grillakis:sto87} to obtain nonlinear stability of the solitary wave
when $1<p<4$ and $p/4<c^2<1$.  This stability theory was later complimented
by \citet{liu}, obtaining nonlinear instability if either $c^2\leq p/4$ or
$p\geq 4$.  Next, we show that this phenomenon is observed again for the
``nearby" periodic traveling wave solutions of \eref{e:a51}.
For such periodic waves, it is easy to see that
\[
\rmn(\mathcal{L}_2)-\rmn(\langle\mathcal{L}_2^{-1}(1),1\rangle)=1,\quad0<-E\ll 1,
\]
and, furthermore,
\[
\lim_{\tilde E\to 0^-}D_{\mathrm{gKdV}}\Big{|}_{(a,E,c,b)=(0,\tilde E,c,0)}=-\Lambda(c,p)(4-p)
\]
for some positive constant $\Lambda(c,p)>0$ (for details, see the asymptotic
analysis in \citep[Section~3.2]{brj}).  It follows that for all\footnote{By
the previous example, we see that such a critical wavespeed also exists for
$p=4$.} $1\leq p<4$ there exists a critical wavespeed $c=c^*(p,E)$ such that
the periodic traveling wave $u(\cdot;a=0,E,c,b=0)$ with $0<-E\ll 1$ is
orbitally stable for $c^*(p,E)<|c|<1$ and spectrally unstable with $k_\rmr=1$
for $|c|<c^*(p,E)$, while for $p>4$ all such long-period waves waves are
spectrally unstable with $k_\rmr=1$.  This is consistent with the solitary
wave orbital stability/instability results of \citep{bona:geo88} and \citep{liu}.

Finally, continuing to restrict to $a=b=0$, we consider the stability of
small amplitude periodic traveling waves of \eref{e:a51} associated to those
periodic orbits of the profile ODE near the nonlinear center
$((1-c^2)^{1/p},0)$. Fixing the wavespeed $c\in(-1,1)$, it follows by basic
asymptotic analysis near the equilibrium solution $u=(1-c^2)^{1/p}$ that
\[
\rmn(\mathcal{L}_2)-\rmn(\langle\mathcal{L}_2^{-1}(1),1\rangle)=1
\]
and, furthermore, that at the nonlinear center we have
\[
D_{\mathrm{gKdV}}(a=0,E^*(c),c,b=0)=-\Omega(c,p)V''\left((1-c^2)^{1/p},0,1-c^2\right)^{-9/2}=-\Omega(c,p)\left((1-c^2)p\right)^{-9/2}
\]
for some positive constant $\Omega(c_0,p)>0$ (see \citep[Section~5]{Jkdv} for
details).
It follows that in a neighborhood of the equilibrium solution
$u\equiv(1-c^2)^{1/p}$ there exists a critical wavespeed $c(p,E)$ such that
all  nearby small-amplitude periodic traveling wave solutions of \eref{e:a51}
of the form $u(\cdot;a=0,E,c,b=0)$ with $E_0<E\ll 1$ are orbitally stable for
$c^*(p,E)<|c|<1$, and are spectrally unstable to perturbations with
$k_\rmr=1$ for $0<|c|<c^*(p,E)$.  In particular, the equilibrium solution
itself is orbitally stable for all $|c|\in(-1,1)$, implying that $c^*(p,E)\to
0^+$ as $E\to (E^*_0)^-$, i.e. as one approaches the equilibrium solution.
Note this is consistent with the numerical calculations of $c^*(p,E)$
presented in \citep{hakkaev:lsa12} in the cases $f(u)=u^2$ and $f(u)=u^3$.

\begin{remark}
It is interesting to note that when $p>4$ the waves with $a=b=0$ undergo a
transition to instability as one moves from a neighborhood of the equilibrium
solution $u\equiv(1-c^2)^{1/p}$ to a neighborhood of the limiting solitary
wave.
\end{remark}

\phantomsection                                         
\addcontentsline{toc}{section}{\refname}                

\bibliographystyle{plainnat}


\end{document}